\newtheorem{thm}{Theorem}
\newtheorem{propo}{Proposition}
\newtheorem{rmq}{Remark}
\newtheorem{lemma}{Lemma}
\def\dis{\displaystyle}
\def\Om{\Omega}
\def\om{\omega}
\newcommand{\eps}{\varepsilon}
\newcommand{\Fin}{\hfill$\Box$}
\newcommand{\N}{\mbox{$I \kern -4pt N$}}
\newcommand{\Q}{\mbox{$Q \kern -8pt I$}}
\newcommand{\R}{\mbox{$I \kern -4pt R$}}
\newcommand{\C}{\mbox{$C \kern -8pt I$}}
\newcommand{\mat}[1]{\mbox{\boldmath{$#1$}}}
\newcommand{
  {\resizebox{}{!}{\input .pstex_t}}
}[2]{
  {\resizebox{#1}{!}{\input #2.pstex_t}}
}
\begin{document}

\title{ On the control of the Burgers-alpha model }
\footnotetext[1]{Partially supported by INCTMat, CAPES and CNPq (Brazil) by grants 307893/2011-1, 552758/2011-6 and 477124/2012-7.}
\footnotetext[2]{Partially supported by grant MTM2010-15592 (DGI-MICINN, Spain).}
\footnotetext[3]{Partially supported by CAPES (Brazil) and grants MTM2010-15592 (DGI-MICINN, Spain).}
%\thanks{Accepted for publication: May 2013.}
\thanks{AMS Subject Classifications:   93B05, 35Q35, 35G25.}
\date{}
\maketitle

\vspace{ -1\baselineskip}

{\small
\begin{center}
 {\sc F\'{a}gner D. Araruna\footnotemark[1]} \\
Departamento de Matem\'{a}tica, Universidade Federal da Para\'iba, 58051-900, Jo\~{a}o Pessoa--PB, Brasil  \\

\vspace{0.5\baselineskip}

 {\sc Enrique Fern\'{a}ndez-Cara\footnotemark[2]} \\
Dpto.\ EDAN, University of Sevilla, Aptdo.~1160, 41080~Sevilla, Spain.  \\

\vspace{0.5\baselineskip}

 {\sc Diego A. Souza\footnotemark[3]} \\
Dpto.\ EDAN, University of Sevilla, 41080~Sevilla, Spain\\

\vspace{0.5\baselineskip}

% (Submitted by: Jean-Michel Coron)
\end{center}
}

\numberwithin{equation}{section}
\allowdisplaybreaks

\smallskip

 \begin{quote}
\footnotesize
{\bf Abstract.}
  	This work is devoted to prove the local null controllability of the Burgers-$\alpha$ model.
   	The state is the solution to a regularized Burgers equation, where the transport term is of the form $zy_x$,
  	$z=(Id-\alpha^2\frac{\partial^2}{\partial x^2})^{-1}y$ and $\alpha>0$ is a small parameter.
   	We also prove some  results concerning the behavior of the null controls and associated states  as $\alpha\to 0^+$.

\end{quote}

%%%%%%%%%%%%%%%%%%%%%%%%%%%%%%%%%%%%%%%%
%%%%%%%%%%%%%%%%%%%%%%%%%%%%%%%%%%%%%%%%
%%%%%%%%%%%%%%%%%%%%%%%%%%%%%%%%%%%%%%%%
%%%%%%%%     SECTION 1    %%%%%%%%%%%%%%%%%%%%%%%
%%%%%%%%%%%%%%%%%%%%%%%%%%%%%%%%%%%%%%%%
%%%%%%%%%%%%%%%%%%%%%%%%%%%%%%%%%%%%%%%%
%%%%%%%%%%%%%%%%%%%%%%%%%%%%%%%%%%%%%%%%

\section{Introduction and main results}

	Let $L>0$ and $T>0$ be positive real numbers. {Let $(a,b)\subset (0,L)$ be a (small) nonempty open subset which will be referred as the control domain.}

	We will consider the following controlled system for the Burgers equation:
\begin{equation}\label{CP}
	\left\{
		\begin{array}{lll}
			y_t - y_{xx} + y y_x = v1_{(a,b)}    		  & \text{in} &  (0,L)\times(0,T),   \\
			y(0,\cdot) = y(L,\cdot) = 0             	   	  & \text{on}&  (0,T),                    \\
			y(\cdot,0) = y_0                      			  & \text{in} &  (0,L).
		\end{array}
	\right.
\end{equation}
	
	In~\eqref{CP}, the function $y=y(x,t)$ can be interpreted as a one-dimensional velocity of a fluid and $y_{0}=y_{0}(x)$ is an initial datum.
	The function $v=v(x,t)$ (usually in~$L^{2}((a,b)\times(0,T))$) is the control acting on the system and $1_{(a,b)}$ denotes the
	characteristic function of $(a,b)$.
	
	In this paper, we will also consider a system similar to \eqref{CP}, where the transport term is of the form $zy_x$, where
	$z$ is the solution to an elliptic problem governed by $y$. Namely, we consider the following {\it regularized} version of~\eqref{CP},
	where $\alpha > 0$:
\begin{equation}\label{CPalpha}
	\left\{
		\begin{array}{lll}
			y_t - y_{xx} + z y_x = v1_{(a,b)}                   		 	   & \text{in}& (0,L)\times(0,T),   \\
			z - \alpha^2 z_{xx} = y                                			   & \text{in}& (0,L)\times(0,T),   \\
			y(0,\cdot) = y(L,\cdot) = z(0,\cdot) = z(L,\cdot) = 0  		   & \text{on}& (0,T),                   \\
			y(\cdot,0) = y_0                                      				   & \text{in}& (0,L).
		\end{array}
	\right.
\end{equation}

	This will be called in this paper the Burgers-$\alpha$ system. It is a particular case of the systems introduced in~\cite{Holm-Staley}
	to describe the balance of convection and stretching in the dynamics of one-dimensional nonlinear waves in a fluid with small viscosity.
	It can also be viewed as a simplified 1D~version of the so called Leray-$\alpha$ system, introduced to describe turbulent flows as an alternative
	to the classical averaged Reynolds models, see~\cite{F-HOLM-T}; see also~\cite{Holm}. By considering a special kernel associated to the
	Green's function for the Helmholtz operator, this model compares successfully with empirical data from turbulent channel and pipe flows
	for a wide range of Reynolds numbers, at least for periodic boundary conditions, see~\cite{Holm} (the Leray-$\alpha$ system is also closely related to the systems treated by
	Leray in~\cite{Leray} to prove the existence of solutions to the Navier-Stokes equations; see~\cite{HOLM-M-R}).
	
	Other references concerning systems of the kind~\eqref{CPalpha} in one and several dimensions are~\cite{Ch-HOLM, G-HOLM} and~\cite{SHEN-G-T, TIAN-S-D},
	respectively for numerical and optimal control issues.
	
	Let us present the notations used along this work. The symbols $C$, $\hat C$ and $C_{i}$, $i = 0, 1, \dots$ stand for generic positive constants
	(usually depending on $a$, $b$, $L$ and~$T$). For any $r\in [ 1,\infty]$ and any given Banach space $X$, $\|\cdot\|_{L^{r}(X)}$
	will denote the usual norm in $L^{r}(0,T;X)$. In particular, the norms in $L^{r}(0,L)$ and~$L^{r}((0,L)\times(0,T))$ will be denoted by $\|\cdot\|_{r}$.
	We will also need the Hilbert space $K^2(0,L) := H^2(0,L) \cap H_0^1(0,L)$.

	The null controllability problems for~\eqref{CP} and~\eqref{CPalpha} at time $T>0$ are the following:
\begin{quote}\it{
	For any $y_0\in H^{1}_{0}(0,L)$, find $v\in L^2((a,b)\times(0,T))$ such that the associated solution to~\eqref{CP}
	$($resp.~\eqref{CPalpha}$)$ satisfies}
\end{quote}
\begin{equation}\label{null_condition}
	y(\cdot,T) = 0\quad\textit{in}\quad(0,L).
\end{equation}

	Recently, important progress has been made in the controllability ana\-lysis of linear and semilinear parabolic equations and systems.
	We refer to the works \cite{AD-FC-MB-Z, FPZ, FC-Z, F-I-2, Z0, EZ-rev}.
	In particular, the controllability of the Burgers equation has been analyzed in~\cite{CHAP, DIAZ, C-G1, F-I-2, SGR-Oleg, HORSIN}.
	Consequently, it is natural to try to extend the known results to systems like \eqref{CPalpha}.
	Notice that \eqref{CPalpha} is different from \eqref{CP} at least in two aspects:
	first, the occurrence of non-local in space nonlinearities;
	secondly, the fact that a small paramter $\alpha$ appears.
	
	Our first main results are the following:
	
\begin{thm}\label{NC-Burgers}
	For each $T>0$, the system \eqref{CPalpha} is locally null-controllable at time $T$. More precisely, there exists $\delta>0$
	$($independent of $\alpha$$)$ such that, for any $y_0\in H^1_0(0,L)$ with $\|y_0\|_{\infty} \leq \delta$, there exist controls
	$v_\alpha\in L^\infty((a,b)\times(0,T))$ and associated states $(y_\alpha,z_\alpha)$ satisfying \eqref{null_condition}. Moreover, one has
\begin{equation}\label{v-unif}
	\|v_\alpha\|_\infty\leq C \quad \forall \alpha>0.
\end{equation}
\end{thm}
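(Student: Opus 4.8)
The plan is to linearize \eqref{CPalpha} about the origin, to invert the resulting linear operator by combining a global Carleman estimate for the heat equation with a controllability argument in weighted spaces, and then to deduce the nonlinear statement by means of an inverse mapping theorem --- throughout, checking that each constant produced is independent of $\alpha$. The point that forces this uniformity is that $z$ is dominated by $y$ uniformly in $\alpha$: testing the elliptic equation $z-\alpha^2 z_{xx}=y$ against $-z_{xx}$ and integrating over $(0,L)$ --- the boundary terms vanish since $z(0,\cdot)=z(L,\cdot)=0$ and, for $y\in H^1_0(0,L)$, also $y(0,\cdot)=y(L,\cdot)=0$ --- yields $\|z_x(\cdot,t)\|_2\le\|y_x(\cdot,t)\|_2$ for a.e.\ $t$, hence, by the embedding $H^1_0(0,L)\hookrightarrow C([0,L])$,
\[
\|z\|_\infty\le C\,\|y\|_{L^\infty(H^1_0(0,L))}
\]
with $C$ independent of $\alpha$; in the same way $z$ inherits, uniformly in $\alpha$, any extra space-time integrability of $y$ that is needed below. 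Note that the bound of $(Id-\alpha^2\partial_{xx})^{-1}$ from $L^2(0,L)$ into $K^2(0,L)$ does degenerate as $\alpha\to 0^+$, so it must never enter the estimates.

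Since $z$ depends linearly on $y$, the convective term $zy_x$ is quadratic near the origin, so the linearization of \eqref{CPalpha} at $0$ is simply the controlled heat equation $y_t-y_{xx}=v1_{(a,b)}$. Following the classical strategy for parabolic controllability, I would fix Fursikov--Imanuvilov-type weights $\rho,\rho_0,\dots$ (see~\cite{F-I-2}), built from $e^{-s\sigma}$ with $\sigma$ associated with a function having no critical point outside $(a,b)$ and vanishing as $t\to T^-$ so that the target $y(\cdot,T)=0$ is encoded in the spaces, together with the corresponding weighted spaces: a state space $E$ controlling $\rho y$ with the needed derivatives, a space $V\hookrightarrow L^\infty((a,b)\times(0,T))$ of admissible controls, and a source space $F$. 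The linear statement to be proved, through a Carleman inequality for the heat operator and a duality/minimization argument, is: for every $y_0\in L^\infty(0,L)$ and every $f\in F$, the heat equation can be steered to rest at time $T$ by a control $v\in V$ with $\|v\|_\infty+\|y\|_E\le C(\|y_0\|_\infty+\|f\|_F)$; the resulting bounded right inverse does not depend on $\alpha$, because the heat operator does not involve $\alpha$.

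Next I would check that $\mathcal{A}(y,v):=(y_t-y_{xx}+zy_x-v1_{(a,b)},\,y(\cdot,0))$, with $z=(Id-\alpha^2\partial_{xx})^{-1}y$, is well defined and of class $C^1$ from a neighbourhood of $(0,0)$ in $E\times V$ into $F\times L^\infty(0,L)$, with $\mathcal{A}(0,0)=(0,0)$ and $\mathcal{A}'(0,0)(y,v)=(y_t-y_{xx}-v1_{(a,b)},\,y(\cdot,0))$; the only nontrivial ingredient is the bilinear estimate $\|zy_x\|_F\le C\,\|y\|_E^2$, which follows from the uniform elliptic bound above --- the weights being arranged so that $\|\rho z\|_\infty$ and $\|\rho y_x\|_2$ are both dominated by $\|y\|_E$ --- and is therefore $\alpha$-uniform. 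Since $\mathcal{A}'(0,0)$ is onto with a right inverse bounded independently of $\alpha$, Liusternik's inverse mapping theorem provides $\delta>0$, depending only on the norm of that right inverse and on the modulus of continuity of $\mathcal{A}'$ near $0$ --- both $\alpha$-free --- such that, for $\|y_0\|_\infty\le\delta$ (with $y_0\in H^1_0(0,L)$, which merely ensures that the outcome is a genuine solution of \eqref{CPalpha}), there exist $v_\alpha$ and $(y_\alpha,z_\alpha)$ satisfying \eqref{CPalpha} and \eqref{null_condition} with $\|(y_\alpha,v_\alpha)\|_{E\times V}\le C\,\|y_0\|_\infty$; in particular $\|v_\alpha\|_\infty\le C\delta$ for every $\alpha>0$, which is \eqref{v-unif}. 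Alternatively, one may freeze $\bar z=(Id-\alpha^2\partial_{xx})^{-1}\bar y$, prove the null controllability of $y_t-y_{xx}+\bar z y_x=v1_{(a,b)}$ with constants depending on $\bar z$ only through $\|\bar z\|_\infty$ (hence $\alpha$-uniform, by the first paragraph), as in the Burgers case~\cite{C-G1}, and then close a Schauder fixed point on a ball on which $\|\bar z\|_\infty$ stays bounded.

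The step I expect to be the main obstacle is the uniform-in-$\alpha$ $L^\infty$ bound on the control: obtaining a control in $L^\infty$ --- rather than just in a weighted $L^2$ space --- requires a careful choice of the weights (so that, schematically, $\rho_0^{-2}\varphi$ is bounded and not only square integrable) together with a bootstrap relying on parabolic regularity for the adjoint state, and one must verify that every constant entering this refinement, like all the others, is $\alpha$-free; once again this reduces to systematically replacing $(Id-\alpha^2\partial_{xx})^{-1}$ by the uniform estimate $\|z\|_\infty\le C\,\|y\|_{L^\infty(H^1_0(0,L))}$.
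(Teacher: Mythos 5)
Your proposal is essentially correct, and your ``alternative'' route in the last sentence of the third paragraph is in fact the paper's actual proof: the authors freeze $\overline y$, compute $z$ from \eqref{elipt-pde}, invoke the Imanuvilov--Yamamoto null controllability result (Theorem~\ref{NC-Parab}) for $y_t-y_{xx}+zy_x=v1_{(a,b)}$ with cost $e^{C(1+1/T+(1+T)\|z\|_\infty^2)}$, use the uniform bound $\|z\|_\infty\le\|\overline y\|_\infty$ (a maximum principle for $z-\alpha^2z_{xx}=\overline y$, slightly different from but equivalent in effect to your $\|z_x\|_2\le\|y_x\|_2$) to make every constant $\alpha$-free, and close with Schauder in a ball of $L^\infty((0,L)\times(0,T))$. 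To get the control in $L^\infty$ rather than $L^2$ they do not use improved observability: they split the controlled state as $\hat y=\theta(t)\hat u+\hat w$, cut off with $w=(1-\eta(x))\hat w$, and read off the new control $v=\eta\theta_t\hat u-\eta_x z\hat w+2\eta_x\hat w_x+\eta_{xx}\hat w$, whose $L^\infty$ bound follows from interior parabolic regularity for $\hat w$ away from the original control region (the Bodart--Gonz\'alez-Burgos--P\'erez-Garc\'{\i}a trick); your weight-refinement/bootstrap on the adjoint is a legitimate alternative that the authors explicitly mention and discard. Your primary route (Liusternik's theorem in Fursikov--Imanuvilov weighted spaces) is genuinely different and would additionally give Lipschitz dependence of the control on $y_0$, at the price of one subtlety you gloss over: since $z(\cdot,t)$ depends on $y(\cdot,t)$ \emph{nonlocally in space}, an $x$-dependent Carleman weight $\rho$ does not commute with $(Id-\alpha^2\partial_{xx})^{-1}$, so the estimate $\|\rho z\|_\infty\le C\|y\|_E$ is not automatic; one must take the weight multiplying $z$ to depend on $t$ only (as is done for Navier--Stokes), so that $\rho(t)z(\cdot,t)=(Id-\alpha^2\partial_{xx})^{-1}(\rho(t)y(\cdot,t))$ and your uniform elliptic bound applies. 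This is precisely the ``nonlocal nonlinearity'' difficulty the paper highlights, and it is the reason the authors prefer the unweighted $L^\infty$ fixed-point setting; with that adjustment your scheme goes through.
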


	{
\begin{thm}\label{Burgers-LargeT}
	For each $y_0\in H^1_0(0,L)$ with $\|y_0\|_\infty < \pi/L$, the system \eqref{CPalpha} is null-controllable at large time.
	In other words, there exist $T>0$
	(independent of $\alpha$), controls $v_\alpha\in L^\infty((a,b)\times(0,T))$ and associated states $(y_\alpha,z_\alpha)$
	satisfying \eqref{null_condition} and~\eqref{v-unif}.
\end{thm}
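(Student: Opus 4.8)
The plan is to combine a global relaxation estimate for the uncontrolled Burgers-$\alpha$ system with the local null controllability of Theorem~\ref{NC-Burgers}. More precisely: I would first run \eqref{CPalpha} with $v\equiv0$ on a time interval $(0,T_0)$, chosen long enough that the free trajectory becomes smaller, in $L^\infty$, than the threshold $\delta$ of Theorem~\ref{NC-Burgers}; then I would apply Theorem~\ref{NC-Burgers} on an interval of fixed length, say $(T_0,T_0+1)$, to drive the state exactly to $0$. Setting $T:=T_0+1$ and concatenating (the control being $0$ on $(0,T_0)$) produces the desired control, \emph{provided} $T_0$ and the local control cost can be taken independent of $\alpha$. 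The hypothesis $\|y_0\|_\infty<\pi/L$ enters because $(\pi/L)^2=\lambda_1$ is the first Dirichlet eigenvalue of $-\partial_{xx}$ on $(0,L)$: Poincaré's inequality will turn the basic energy balance into a strictly dissipative differential inequality exactly under this sub-criticality condition.

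For the relaxation step I would argue as follows. Reading the first equation of \eqref{CPalpha} (with $v\equiv0$) as a linear parabolic equation for $y$ with transport coefficient $z$ and no zero-order term, the parabolic maximum principle gives $\|y(\cdot,t)\|_\infty\le\|y_0\|_\infty$ for all $t\ge0$, while the elliptic maximum principle for $z-\alpha^2z_{xx}=y$, $z(0,\cdot)=z(L,\cdot)=0$, gives $\|z(\cdot,t)\|_\infty\le\|y(\cdot,t)\|_\infty$; hence $\|z(\cdot,t)\|_\infty\le\|y_0\|_\infty$ with a bound free of $\alpha$. Multiplying the $y$-equation by $y$ and integrating on $(0,L)$, the transport term is controlled by $|\int_0^L z\,y\,y_x\,dx|\le\|y_0\|_\infty\|y\|_2\|y_x\|_2$, so that Young's inequality yields $\frac{d}{dt}\|y\|_2^2+\|y_x\|_2^2\le\|y_0\|_\infty^2\|y\|_2^2$; Poincaré's inequality $\|y_x\|_2^2\ge(\pi/L)^2\|y\|_2^2$ then gives $\frac{d}{dt}\|y\|_2^2\le-\big((\pi/L)^2-\|y_0\|_\infty^2\big)\|y\|_2^2$, i.e.\ $\|y(\cdot,t)\|_2\le e^{-\mu t}\|y_0\|_2$ for a rate $\mu>0$ independent of $\alpha$. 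A standard parabolic smoothing argument on the time slices $(t-1,t)$ --- with forcing $-z\,y_x$, whose $L^2((t-1,t)\times(0,L))$-norm is bounded by $\|y_0\|_\infty\|y_x\|_{L^2((t-1,t)\times(0,L))}$ and hence, by the energy identity, exponentially small --- upgrades this to $\|y(\cdot,t)\|_\infty\le C_1e^{-\mu(t-1)}\|y_0\|_2$ for $t\ge1$, with $C_1$ and $\mu$ independent of $\alpha$; the same a priori bounds give global existence of the free trajectory via a routine Galerkin argument. It then remains to fix $T_0\ge0$, independently of $\alpha$, so large that $C_1e^{-\mu(T_0-1)}\|y_0\|_2\le\delta$ (with $T_0=0$ if $\|y_0\|_\infty\le\delta$ already), to note that $y_\alpha(\cdot,T_0)\in H_0^1(0,L)$ by the energy estimates, and to apply Theorem~\ref{NC-Burgers} on $(T_0,T_0+1)$ with datum $y_\alpha(\cdot,T_0)$: the resulting control is bounded in $L^\infty$ by the constant of Theorem~\ref{NC-Burgers} attached to the fixed time $1$, hence uniformly in $\alpha$, which gives \eqref{v-unif} and \eqref{null_condition} with $T:=T_0+1$.

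The main obstacle is precisely the uniformity in $\alpha$ of the relaxation step: one must avoid any bound for $z$ whose constant degenerates as $\alpha\to0^+$ --- in particular the $H^1$ elliptic estimate, for which $\|z_x\|_2$ is only controlled by $\alpha^{-1}\|y\|_2$ --- which is why the whole argument is driven by the $\alpha$-free comparison $\|z\|_\infty\le\|y\|_\infty\le\|y_0\|_\infty$, and why the dissipation threshold comes out to be exactly $\|y_0\|_\infty<\pi/L$. The auxiliary facts used along the way, namely global well-posedness of the uncontrolled Burgers-$\alpha$ system and the parabolic-regularity passage from $L^2$- to $L^\infty$-decay, are standard and can be handled as in the analysis supporting Theorem~\ref{NC-Burgers}.
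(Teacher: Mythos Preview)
Your proposal is correct and follows the same two-step strategy as the paper: let the uncontrolled solution relax via the $\alpha$-uniform energy decay (driven by $\|z\|_\infty\le\|y_0\|_\infty<\pi/L$ and Poincar\'e's inequality), then invoke Theorem~\ref{NC-Burgers} once the state is below the threshold~$\delta$. The only minor difference is in how the $L^2$ decay is upgraded to the $L^\infty$ smallness needed by Theorem~\ref{NC-Burgers}: the paper tests the equation against $-(y_\alpha)_{xx}$ to obtain exponential decay of $\|(y_\alpha)_x(\cdot,t)\|_2$ directly (and then uses $H_0^1(0,L)\hookrightarrow L^\infty(0,L)$), whereas you appeal to parabolic smoothing on unit time slices; both routes rely only on the $\alpha$-free bound $\|z\|_\infty\le\|y_0\|_\infty$ and are essentially interchangeable.
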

	}

	{Recall that $\pi/L$ is the square root of the first eigenvalue of the Dirichlet Laplacian in this case.
	On the other hand, notice that these results provide controls in~$L^\infty((a,b)\times(0,T))$ and not only in~$L^2((a,b)\times(0,T))$}.
	In fact, this is very convenient not only in~\eqref{CP}  and~\eqref{CPalpha}, but also in some intermediate problems arising in the proofs, since this way we obtain better estimates for the states and the existence and convergence assertions are easier to establish.
	
	The main novelty of these results is that they ensure the control of a kind of nonlocal nonlinear parabolic equations.
	This makes the difference with respect to other previous works, such as~\cite{FPZ} or~\cite{AD-FC-MB-Z, FC-Z}.
	This is not frequent in the analysis of the controllability of PDEs. Indeed, in general when we deal with nonlocal nonlinearities,
	it does not seem easy to transmit the information furnished by locally supported controls to the whole domain in a satisfactory way.

	We will also prove a result concerning the controllability in the limit, as~$\alpha\to 0^+$.
	More precisely, the following holds:
\begin{thm}\label{conver-cont}
	Let $T>0$ be given and let $\delta>0$ be the constant furnished by Theorem~$\ref{NC-Burgers}$.
	Assume that $y_0\in H^1_0(0,L)$ with $\|y_0\|_{L^\infty} \leq \delta$, let $v_\alpha$ be a null control
	for~\eqref{CPalpha} satisfying \eqref{v-unif} and let $(y_\alpha,z_\alpha)$ be an associated state satisfying \eqref{null_condition}.
	Then, at least for a subsequence, one has
\begin{equation}\label{conv}
	\begin{array}{c}
		v_\alpha\to v\hbox{ weakly-$*$} \hbox{ in } L^\infty((a,b)\times(0,T)),\\
		z_\alpha \to y \hbox{ and } y_\alpha \to y\hbox{ weakly-$*$} \hbox{ in } L^\infty((0,L)\times(0,T))
	\end{array}
\end{equation}
as $\alpha\to0^+$, where $(v,y)$ is a control-state pair  for~\eqref{CP} that verifies \eqref{null_condition}.
\end{thm}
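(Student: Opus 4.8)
The plan is to combine uniform-in-$\alpha$ a priori estimates, an Aubin--Lions compactness argument, and a passage to the limit in the nonlocal equation \eqref{CPalpha}; below $C$ denotes a constant independent of $\alpha$ and $Q_T:=(0,L)\times(0,T)$. First I would establish the uniform bounds. Since $\|v_\alpha\|_\infty\le C$ and $\|y_0\|_\infty\le\delta$, the weak maximum principle applied to the parabolic equation satisfied by $y_\alpha$ (in which $z_\alpha$ enters only through a first-order term, hence does not affect the comparison argument, the constructed states being regular enough for this) gives $\|y_\alpha\|_{L^\infty(Q_T)}\le\delta+T\|v_\alpha\|_\infty\le C$; in turn, the maximum principle for the Helmholtz operator in $z_\alpha-\alpha^2 z_{\alpha,xx}=y_\alpha$ with homogeneous Dirichlet data yields the \emph{non-degenerate} bound $\|z_\alpha\|_{L^\infty(Q_T)}\le\|y_\alpha\|_{L^\infty(Q_T)}\le C$. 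Multiplying the first equation of \eqref{CPalpha} by $y_\alpha$ and using $|\iint_{Q_T}z_\alpha y_{\alpha,x}y_\alpha|\le\|z_\alpha\|_{L^\infty(Q_T)}\|y_{\alpha,x}\|_2\|y_\alpha\|_2$ one absorbs the convective term, obtaining a uniform bound for $y_\alpha$ in $L^\infty(0,T;L^2(0,L))\cap L^2(0,T;H^1_0(0,L))$ and then, from the equation, for $y_{\alpha,t}$ in $L^2(0,T;H^{-1}(0,L))$. Multiplying the elliptic equation by $-z_{\alpha,xx}$ and integrating by parts gives $\alpha^2\|z_{\alpha,xx}\|_2^2\le\tfrac12\|y_{\alpha,x}\|_2^2$, hence the crucial quantitative estimate
\[
\|z_\alpha-y_\alpha\|_{L^2(Q_T)}=\alpha^2\|z_{\alpha,xx}\|_{L^2(Q_T)}\le\tfrac{\alpha}{\sqrt2}\,\|y_{\alpha,x}\|_{L^2(Q_T)}\le C\alpha.
\]

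Next, the compactness step. By Aubin--Lions ($H^1_0(0,L)\hookrightarrow\hookrightarrow L^2(0,L)\hookrightarrow H^{-1}(0,L)$) and Banach--Alaoglu, along a subsequence $y_\alpha\to y$ strongly in $L^2(Q_T)$, weakly in $L^2(0,T;H^1_0(0,L))$ and weakly-$*$ in $L^\infty(Q_T)$, and $v_\alpha\to v$ weakly-$*$ in $L^\infty((a,b)\times(0,T))$. The bound $\|z_\alpha-y_\alpha\|_{L^2(Q_T)}\le C\alpha$ then forces $z_\alpha\to y$ strongly in $L^2(Q_T)$, and, being bounded in $L^\infty(Q_T)$, also weakly-$*$ there; this is precisely \eqref{conv}. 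Moreover, since $\{y_\alpha\}$ is bounded in $L^\infty(0,T;L^2(0,L))$ with $\{y_{\alpha,t}\}$ bounded in $L^2(0,T;H^{-1}(0,L))$, one gets $y_\alpha\to y$ in $C([0,T];H^{-1}(0,L))$; because $y_\alpha(\cdot,0)=y_0$ and $y_\alpha(\cdot,T)=0$ for every $\alpha$, this yields $y(\cdot,0)=y_0$ and $y(\cdot,T)=0$, i.e.\ $y$ satisfies \eqref{null_condition}.

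Finally, the passage to the limit in the equation. Writing \eqref{CPalpha} in weak form against test functions vanishing on the lateral boundary and at $t=T$, every term except $\iint_{Q_T}z_\alpha y_{\alpha,x}\,\phi$ is linear and passes to the limit immediately using the convergences above. For that term, $z_\alpha\phi\to y\phi$ strongly in $L^2(Q_T)$ (product of an $L^2$-strongly convergent sequence and a bounded function) while $y_{\alpha,x}\to y_x$ weakly in $L^2(Q_T)$, so $\iint_{Q_T}z_\alpha y_{\alpha,x}\,\phi\to\iint_{Q_T}yy_x\,\phi$. Hence $y$ is a weak solution of \eqref{CP} associated with the control $v$ and the datum $y_0$, and it verifies \eqref{null_condition}; thus $(v,y)$ is a control-state pair for \eqref{CP} satisfying \eqref{null_condition}, which is the assertion.

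The main obstacle is exactly the passage to the limit in the \emph{nonlocal} nonlinearity: only weak convergence of $y_{\alpha,x}$ is available, so one needs \emph{strong} convergence of $z_\alpha$, which is not evident a priori and is obtained here by combining the Aubin--Lions compactness of $\{y_\alpha\}$ with the quantitative estimate $\|z_\alpha-y_\alpha\|_{L^2(Q_T)}\le C\alpha$. A secondary, more technical, difficulty is to make all a priori estimates uniform in $\alpha$ in spite of the $\alpha$-dependence of the convection coefficient $z_\alpha$; this rests on the maximum-principle bound $\|z_\alpha\|_{L^\infty(Q_T)}\le\|y_\alpha\|_{L^\infty(Q_T)}$, which does not degenerate as $\alpha\to0^+$.
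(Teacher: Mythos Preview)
Your argument is correct and follows the same overall scheme as the paper (uniform bounds, compactness, passage to the limit), but the execution differs in two notable places. First, for the convergence of $z_\alpha$, the paper writes $(z_\alpha-y)-\alpha^2(z_\alpha-y)_{xx}=(y_\alpha-y)+\alpha^2 y_{xx}$, tests against $-(z_\alpha-y)_{xx}$, and uses that the limit $y$ lies in $L^2(0,T;H^2)$; you instead use the more direct identity $z_\alpha-y_\alpha=\alpha^2 z_{\alpha,xx}$ together with the elliptic estimate $\alpha^2\|z_{\alpha,xx}\|_2\le(\alpha/\sqrt2)\|y_{\alpha,x}\|_2$, which is shorter and does not require any regularity of the limit. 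Second, the paper invokes the full $L^2(0,T;H^2)$/$y_t\in L^2$ estimates of Proposition~\ref{E-U-B-alpha} and Aubin--Lions to get $y_\alpha\to y$ strongly in $L^2(0,T;H^1_0)$, whence $z_\alpha (y_\alpha)_x\to yy_x$ strongly in $L^1$; you work with the weaker bounds $y_\alpha\in L^2(0,T;H^1_0)$, $y_{\alpha,t}\in L^2(0,T;H^{-1})$, obtain only strong $L^2(Q_T)$ convergence of $y_\alpha$ and $z_\alpha$, and then pass to the limit in the nonlinearity via the weak/strong pairing $y_{\alpha,x}\rightharpoonup y_x$ in $L^2$ against $z_\alpha\phi\to y\phi$ strongly in $L^2$. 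Your route is more elementary and self-contained; the paper's route yields the stronger convergence $y_\alpha,z_\alpha\to y$ in $L^2(0,T;H^1_0)$ as a by-product. Your explicit treatment of the terminal condition via $y_\alpha\to y$ in $C([0,T];H^{-1})$ is also a point the paper leaves implicit.
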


	The rest of this paper is organized as follows.
	In~Section~\ref{Sec2}, we prove some results concerning the existence, uniqueness and regularity of the solution to \eqref{Palpha}.
	Sections~\ref{Sec3}, \ref{Sec4}, and~\ref{Sec5} deal with the proofs of Theorems~\ref{NC-Burgers}, \ref{Burgers-LargeT} and~\ref{conver-cont}, respectively.
	Finally, in~Section~\ref{Sec6}, we present some additional comments and questions.

%%%%%%%%%%%%%%%%%%%%%%%%%%%%%%%%%%%%%%%%
%%%%%%%%%%%%%%%%%%%%%%%%%%%%%%%%%%%%%%%%
%%%%%%%%%%%%%%%%%%%%%%%%%%%%%%%%%%%%%%%%
%%%%%%%%     SECTION 2    %%%%%%%%%%%%%%
%%%%%%%%%%%%%%%%%%%%%%%%%%%%%%%%%%%%%%%%
%%%%%%%%%%%%%%%%%%%%%%%%%%%%%%%%%%%%%%%%
%%%%%%%%%%%%%%%%%%%%%%%%%%%%%%%%%%%%%%%%

\section{Preliminaries}\label{Sec2}

 	In this Section, we will first establish a result concerning global existence and uniqueness for the
	Burgers-$\alpha$ system
\begin{equation}\label{Palpha}%\tag{$P_\alpha$}
	\left\{
		\begin{array}{lll}
			y_t - y_{xx} + z y_x = f                                			& \text{in} & (0,L)\times(0,T),        \\
			z - \alpha^2 z_{xx} = y                               			& \text{in} & (0,L)\times(0,T),        \\
			y(0,\cdot) = y(L,\cdot) = z(0,\cdot) = z(L,\cdot) = 0   		& \text{on}& (0,T),                         \\
			y(\cdot,0) = y_0                                        				& \text{in} & (0,L).
		\end{array}
	\right.
\end{equation}

	It is the following:
\begin{propo}\label{E-U-B-alpha}
	Assume that $\alpha>0$.
	Then, for any $f\in L^\infty((0,L)\times(0,T))$ and $y_0\in H^1_0(0,L)$, there exists exactly one solution $(y_\alpha,z_\alpha)$ to \eqref{Palpha}, with
\[
	\begin{array}{c}
		\noalign{\smallskip}\dis
		y_\alpha\in L^2(0, T;H^2(0,L)) \cap C^0([0, T];H^1_0(0,L)),\\
		\noalign{\smallskip}\dis
		z_\alpha\in L^2\big(0,T;H^4(0,L)\big) \cap L^\infty\big(0,T;H^1_0(0,L)\cap H^3(0,L)\big),\\
		\noalign{\smallskip}\dis
		(y_\alpha)_t\in L^2((0,L)\times(0,T)),~(z_\alpha)_t\in L^2\big(0,T;H^2(0,L)\big).
	\end{array}
\]
	Furthermore, the following estimates hold:
\begin{equation}\label{yalpha-ineq}
	\begin{alignedat}{2}
		\noalign{\smallskip}\dis
		\|(y_\alpha)_t\|_{2}+\|y_\alpha\|_{L^2(H^2)}+\|y_\alpha\|_{L^\infty(H_0^1)}\leq&~C(\|y_0\|_{H_0^1}+\|f\|_{2})e^{C(M(T))^2},\\
		\noalign{\smallskip}\dis
		\|z_\alpha\|^2_{L^\infty(L^2)}+2\alpha^2\|z_\alpha\|^2_{L^\infty(H^1_0)}\leq&~\| y_\alpha\|^2_{L^\infty( L^2)},\\
		\noalign{\smallskip}\dis
		2\alpha^2\|(z_\alpha)_x\|^2_{L^\infty(L^2)}+\alpha^4\|(z_\alpha)_{xx}\|^2_{L^\infty(L^2)}\leq&~\| y_\alpha\|^2_{L^\infty( L^2)},\\
		\noalign{\smallskip}\dis
		\|y_\alpha\|_\infty\leq&~ M(T),\\
		\noalign{\smallskip}\dis
		\|z_\alpha\|_\infty\leq&~ M(T),
	\end{alignedat}
\end{equation}
	where $M(t):=\|y_0\|_\infty+t\|f\|_\infty$.
\end{propo}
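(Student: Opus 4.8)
The plan is to construct the solution by a fixed-point argument and to read off every estimate in~\eqref{yalpha-ineq} from the maximum principle and from standard energy identities. Fix $\alpha>0$, $f\in L^\infty((0,L)\times(0,T))$ and $y_0\in H^1_0(0,L)$, and set $M(t):=\|y_0\|_\infty+t\|f\|_\infty$. On the closed convex set $B:=\{\bar y\in L^2((0,L)\times(0,T)):\|\bar y\|_\infty\leq M(T)\}$ I would define a map $\Lambda$ by first solving, for a.e.\ $t$, the elliptic problem $z-\alpha^2z_{xx}=\bar y$ with $z(0,\cdot)=z(L,\cdot)=0$, and then solving the \emph{linear} parabolic problem $y_t-y_{xx}+z\,y_x=f$ with the boundary and initial data of~\eqref{Palpha}. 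For $\bar y\in B$ one has $z\in L^\infty$, so standard linear parabolic theory provides a unique $y=\Lambda(\bar y)$ in $L^2(0,T;K^2(0,L))\cap C^0([0,T];H^1_0(0,L))$ with $y_t\in L^2((0,L)\times(0,T))$.

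Next I would check that $\Lambda(B)\subset B$. Since $I-\alpha^2\partial_{xx}$ with homogeneous Dirichlet data obeys the weak maximum principle, comparison of $z$ with the constants $\pm\|\bar y(\cdot,t)\|_\infty$ gives $\|z(\cdot,t)\|_\infty\leq\|\bar y(\cdot,t)\|_\infty\leq M(T)$, which is the next-to-last estimate in~\eqref{yalpha-ineq}. The parabolic equation for $y$ has no zeroth-order term, so the drift $z\,y_x$ does not destroy the maximum principle, and comparing $y$ with $\pm M(t)$ (which solve the equation with right-hand side $\pm\|f\|_\infty$ and dominate the data) yields $\|y(\cdot,t)\|_\infty\leq M(t)$, i.e.\ the last estimate in~\eqref{yalpha-ineq} and $\Lambda(\bar y)\in B$. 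To get compactness of $\Lambda$ I would test the parabolic equation with $y_t$, absorb $\int z\,y_x\,y_t$ using $\|z\|_\infty\leq M(T)$, and conclude by Gr\"onwall's lemma that $\|y_t\|_2+\|y\|_{L^\infty(H^1_0)}\leq C(\|y_0\|_{H^1_0}+\|f\|_2)e^{C(M(T))^2}$; then $\|y_{xx}\|_2\leq\|y_t\|_2+\|z\|_\infty\|y_x\|_2+\|f\|_2$ closes the first estimate in~\eqref{yalpha-ineq}. These bounds being uniform on $B$, the Aubin--Lions lemma makes $\Lambda(B)$ relatively compact in $L^2((0,L)\times(0,T))$, and continuity of $\Lambda$ follows by passing to the limit in the linear problem along a convergent sequence $\bar y^n\to\bar y$ in $B$ (the transport term is handled by the strong convergence of the associated $z^n$ together with the $L^2(0,T;H^1_0(0,L))$-bound on $y^n$). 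Schauder's theorem then produces $(y_\alpha,z_\alpha)$ solving~\eqref{Palpha}. The remaining two estimates in~\eqref{yalpha-ineq} come from testing $z_\alpha-\alpha^2(z_\alpha)_{xx}=y_\alpha$ with $z_\alpha$ and with $-(z_\alpha)_{xx}$, and the full regularity claimed for $z_\alpha$ follows from elliptic regularity for this identity (gaining two derivatives from $y_\alpha\in L^\infty(H^1_0)\cap L^2(H^2)$) and for its time-derivative $(z_\alpha)_t-\alpha^2(z_\alpha)_{txx}=(y_\alpha)_t$, using $(y_\alpha)_t\in L^2((0,L)\times(0,T))$.

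For uniqueness I would take two solutions, subtract to obtain $y_t-y_{xx}+z^1y_x+z\,y^2_x=0$, $z-\alpha^2z_{xx}=y$, $y(\cdot,0)=0$, test with $y$, integrate by parts $\int z^1y_xy\,dx=-\tfrac12\int z^1_xy^2\,dx$ (legitimate since $z^1\in L^\infty(0,T;H^3(0,L))$ gives $z^1_x\in L^\infty((0,L)\times(0,T))$), estimate $\int z\,y^2_x\,y\,dx\leq\|y^2_x(t)\|_{L^\infty(0,L)}\|z(t)\|_2\|y(t)\|_2\leq\|y^2_x(t)\|_{L^\infty(0,L)}\|y(t)\|_2^2$ via $\|z(t)\|_2\leq\|y(t)\|_2$, and apply Gr\"onwall's lemma with the coefficient $t\mapsto\|z^1_x(t)\|_{L^\infty(0,L)}+\|y^2_x(t)\|_{L^\infty(0,L)}$, which lies in $L^1(0,T)$ since $z^1\in L^\infty(0,T;H^3(0,L))$ and $y^2\in L^2(0,T;H^2(0,L))$. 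This forces $y\equiv0$ and hence $z\equiv0$.

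I do not expect a genuine obstruction: the semilinear structure of the transport term (first order in $y$) keeps all the a priori estimates closed. The work that needs the most care is bookkeeping — checking that every integration by parts involving $z\,y_x$ is justified by the regularity available at that stage, and that all constants depend only on $\|y_0\|_{H^1_0}$, $\|f\|_2$, $\|f\|_\infty$ and $T$, with precisely the factor $e^{C(M(T))^2}$ displayed in~\eqref{yalpha-ineq}.
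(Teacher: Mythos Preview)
Your proposal is correct and follows essentially the same route as the paper: Schauder's theorem on the $L^\infty$-ball of radius $M(T)$, invariance via the maximum principle (the paper isolates this as a separate lemma, proved by testing against $(y-M(t))_+$, which is precisely your comparison with $\pm M(t)$), compactness via Aubin--Lions, and uniqueness by a Gr\"onwall argument on the difference. The only cosmetic discrepancies are that the paper runs the fixed point in the $L^\infty$ topology rather than $L^2$, and you have interchanged ``last'' and ``next-to-last'' when pointing to the two $L^\infty$ bounds in~\eqref{yalpha-ineq}.
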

\begin{proof}

	\textsc{a) Existence:}
	We will reduce the proof to the search of a fixed point of an appropriate mapping $\Lambda_\alpha$.
	
	Thus, for each $\overline y\in L^{\infty}((0,L)\times(0,T))$, let $z = z(x,t)$ be the unique solution to
\begin{equation}\label{elipt-pde}
	\left\{
		\begin{array}{lll}
			z-\alpha^2  z_{xx} = \overline y,    	 &\text{in}& (0,L)\times(0,T) ,    \\
			z(0,\cdot) =  z(L,\cdot) = 0        		 &\text{on}& (0,T).
		\end{array}
	\right.
\end{equation}
	Since $\overline y\in L^{\infty}((0,L)\times(0,T))$, it is clear that $z \in L^\infty(0,T; K^2(0,L))$.
	Then, thanks to the Sobolev embedding, we have $z,~z_x\in L^{\infty}((0,L)\times(0,T))$ and the following is satisfied:
\begin{equation}\label{z-ineq}
	\begin{alignedat}{2}
		\|z\|^2_{L^\infty(L^2)}+2\alpha^2\|z\|^2_{L^\infty(H^1_0)}\leq&~\|\overline y\|^2_{L^\infty( L^2)},\\
		2\alpha^2\|z_x\|^2_{L^\infty(L^2)}+\alpha^4\|z_{xx}\|^2_{L^\infty(L^2)}\leq &~\|\overline y\|^2_{L^\infty( L^2)},\\
		\|z\|_{\infty}\leq &~\|\overline y\|_{\infty}.
	\end{alignedat}
\end{equation}
	From this $z$, we can obtain $y$ as the unique solution to the linear problem
\begin{equation}\label{y-eq}
	\left\{
		\begin{array}{lll}
			y_t  - y_{xx} + z y_x = f     	 & \text{in}&  (0,L)\times(0,T),  \\
			y(0,\cdot) = y(L,\cdot) = 0   	 &\text{on}&  (0,T),                   \\
			y(\cdot,0) = y_0              		 &\text{in} &  (0,L).
		\end{array}
	\right.
\end{equation}
	Since $z, f\in L^{\infty}((0,L)\times(0,T))$ and $y_0\in H^1_0(0,L)$, it is {\it clear} that
\[
	\begin{array}{c}
		\noalign{\smallskip}\dis
		y \in L^2(0, T;K^2(0,L)) \cap C^0([0, T];H^1_0(0,L)),\\
		\noalign{\smallskip}\dis
		y_t \in L^2((0,L) \times (0,T))
	\end{array}
\]
	and we have the following estimate:
\begin{equation}\label{y-ineq}
	\|y_t\|_{2}+\|y\|_{L^2(H^2)}+\|y\|_{L^\infty(H_0^1)}\leq C(\|y_0\|_{H_0^1}+\|f\|_{2})e^{C\|z\|_\infty^2}.
\end{equation}
	Indeed, this can be easily deduced, for instance, from a standard Galerkin approximation and Gronwall's Lemma;
	see for instance~\cite{D-JLL}.

	We will use the following result, whose proof is given below, after the proof of this Theorem.

\begin{lemma}\label{est-L-infty}
	The solution $y$ to~\eqref{y-eq} satisfies
\begin{equation}\label{y-infty}
	\|y\|_{\infty}\leq M(T).
\end{equation}
\end{lemma}

	Now, we introduce the Banach space
\begin{equation}\label{spaceW}
	W = \{w\in L^\infty(0,T; H_0^1(0,L)) : w_t\in L^2((0,L)\times(0,T))\},
\end{equation}
	the closed ball
\[
	K=\{w\in L^{\infty}((0,L)\times(0,T)) : \|w\|_{\infty}\leq  M(T)\}
\]
	and the mapping $\tilde\Lambda_\alpha$, with $\tilde\Lambda_\alpha(\overline y)=y$ for all $\overline y \in L^{\infty}((0,L)\times(0,T))$.
	Obviously $\tilde\Lambda_\alpha$ is well defined and, in view of Lemma~\ref{est-L-infty}, maps the whole space
	$L^{\infty}((0,L)\times(0,T))$ into $W \cap K$.
		
	Let us denote by $\Lambda_\alpha$ the restriction to~$K$ of~$\tilde\Lambda_\alpha$.
	Then, thanks to Lemma~\ref{est-L-infty}, $\Lambda_\alpha$ maps $K$ into itself.
	Moreover, it is clear that $\Lambda_\alpha: K \mapsto K$ satisfies the hypotheses of Schauder's Theorem.
	Indeed, this nonlinear mapping is continuous and compact
	(the latter is a consequence of the fact that, if $B$ is bounded in $L^{\infty}((0,L)\times(0,T))$, then $\Lambda_\alpha(B)$
	is bounded in $W$ and therefore it is relatively compact in the space $L^{\infty}((0,L)\times(0,T))$, in view of the
	classical results of the Aubin-Lions' kind, see for instance~\cite{Simon}). Consequently, $\Lambda_\alpha$ possesses at least one fixed point in $K$.
	
	This immediately achieves the proof of existence.

\

	\textsc{b) Uniqueness:}
	Let $(z_\alpha',y_\alpha')$ be another solution to~\eqref{Palpha} and let us introduce $u:=y_\alpha-y_\alpha'$ and $m:=z_\alpha-z_\alpha'$.
	Then
\[
	\left\{
		\begin{array}{lll}
			u_t - u_{xx} + z_\alpha u_x = - m (y_\alpha')_x        		& \text{in}& (0,L)\times(0,T),   \\
			m - \alpha^2 m_{xx} = u                                			& \text{in}& (0,L)\times(0,T),   \\
			u(0,\cdot) = u(L,\cdot) = m(0,\cdot) = m(L,\cdot) = 0  	& \text{on}& (0,T),                   \\
			u(\cdot,0) = 0                                               			& \text{in}& (0,L).
		\end{array}
	\right.
\]
	Since $y_\alpha'\in L^2(0,T;H^2(0,L))$, thanks to the Sobolev embedding, we have $y_\alpha'\in  L^2(0,T;C^1[0,L])$.
	Therefore, we easily get from the first equation of the previous system that
\[
	\frac{1}{2}\frac{d}{dt}\|u\|_2^2 +\|u_x\|_2^2 \leq \|z_\alpha\|_\infty\|u_x\|_2\|u\|_2+\|(y_\alpha')_x\|_\infty\|m\|_2\|u\|_2.
\]
	Since $\|m\|_2\leq\|u\|_2$, we have
\[
	\frac{d}{dt}\|u\|_2^2+\|u_x\|_2^2 \leq\bigg(\|z_\alpha\|^2_\infty+2\|(y_\alpha')_x\|_\infty\bigg)\|u\|_2^2.
\]	
	Therefore, in view of Gronwall's Lemma, we necessarily have $u\equiv0$.
	Accor\-dingly, we also obtain $m\equiv0$ and uniqueness holds.
\end{proof}
	
	Let us now return to Lemma \ref{est-L-infty} and establish its proof.

\noindent
	\begin{proof}[Proof of Lemma~\ref{est-L-infty}]
	Let $y$ be the solution to~\eqref{y-eq} and let us set $w= (y-M(t))_+$.
	Notice that $w(x,0) \equiv 0$ and $w(0,t) \equiv w(L,t) \equiv 0$.
	
	Let us multiply the first equation of~\eqref{y-eq} by $w$ and let us integrate on $(0,L)$.
	Then we obtain the following for all $t$:
\[
	\int_0^L (y_tw +zy_xw)\,dx +\int_0^L y_xw_x \,dx = \int_0^Lfw \,dx.
\]
	This can also be written in the form
\[
	\int_0^L (w_tw +zw_xw)\,dx +\int_0^L |w_x|^2 \,dx = \int_0^L(f-M_t)w \,dx
\]
	and, consequently, we obtain the identity
\[
	\frac{1}{2}\frac{d}{dt}\|w\|_2^2+\|w_x\|_2^2 -\frac{1}{2}\int_0^L z_x|w|^2\,dx= \int_0^L(f-\|f\|_\infty)w\,dx
\]
	and, therefore,
\begin{equation}\label{f1}
	\frac{1}{2}\frac{d}{dt}\|w\|_2^2+\|w_x\|_2^2 -\frac{1}{2}\int_0^L z_x|w|^2\,dx\leq 0.
\end{equation}
	Since $z_{x}\in L^{\infty}((0,L)\times(0,T))$, it follows by (\ref{f1}) that
\[
	\frac{d}{dt}\|w\|_2^2 \leq \|z_x\|_\infty\|w\|_2^2.
\]
	Then, using again Gronwall's Lemma, we see that $w\equiv0$.
	
	Analogously, if we introduce $\widetilde w=  (y+M(t))_-$, similar computations lead to the identity $\widetilde w\equiv0$.
	Therefore, $y$ satisfies \eqref{y-infty} and the Lemma is proved.
	
\end{proof}
		
	We will now see that, when $f$ is fixed and $\alpha \to 0^+$, the solution to~\eqref{Palpha} converges to the solution to the Burgers system
\begin{equation}\label{P}
	\left\{
		\begin{array}{lll}
			y_t - y_{xx} + y y_x = f              		& \text{in}&  (0,L)\times(0,T),   \\
			y(0,\cdot) = y(L,\cdot) = 0        	        & \text{on}&  (0,T),                   \\
			y(\cdot,0) = y_0       		                & \text{in}&  (0,L).
		\end{array}
	\right.
\end{equation}
	
\begin{propo}\label{convergence}
	Assume that $y_0\in H^1_0(0,L)$ and $f\in L^\infty((0,L)\times(0,T))$ are given. For each $\alpha>0$, let $(y_\alpha,z_\alpha)$ be
	the unique solution to~\eqref{Palpha}. Then
\begin{equation}\label{cccconvzy}
	z_\alpha \to y \hbox{ and } y_\alpha\to y \hbox{ strongly in } L^2(0,T;H_0^1(0,L))
\end{equation}
	as $\alpha\to 0^+$, where $y$ is the unique solution to~\eqref{P}.
\end{propo}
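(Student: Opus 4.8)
The plan is to derive $\alpha$-uniform a priori estimates for $(y_\alpha,z_\alpha)$, extract weakly convergent subsequences, pass to the limit in the nonlinear term, identify the limit as the unique solution of \eqref{P}, and finally upgrade weak convergence to strong convergence in $L^2(0,T;H_0^1(0,L))$. First I would observe that by Proposition~\ref{E-U-B-alpha} (applied with the fixed data $y_0,f$, so that $M(T)$ does not depend on $\alpha$) one has the bounds $\|y_\alpha\|_\infty\le M(T)$, $\|z_\alpha\|_\infty\le M(T)$, and, since $\|z_\alpha\|_\infty\le\|y_\alpha\|_\infty\le M(T)$ enters the exponential on the right-hand side of \eqref{y-ineq} in place of $\|z\|_\infty$, also
\[
	\|(y_\alpha)_t\|_2+\|y_\alpha\|_{L^2(H^2)}+\|y_\alpha\|_{L^\infty(H_0^1)}\le C(\|y_0\|_{H_0^1}+\|f\|_2)e^{C M(T)^2},
\]
a bound independent of $\alpha$. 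From the elliptic estimates in \eqref{yalpha-ineq} one gets $\|z_\alpha\|_{L^\infty(L^2)}\le\|y_\alpha\|_{L^\infty(L^2)}$, uniformly bounded, and crucially $\|z_\alpha-y_\alpha\|_{L^2(L^2)}\to0$: indeed $z_\alpha-y_\alpha=\alpha^2(z_\alpha)_{xx}$, and $\alpha^4\|(z_\alpha)_{xx}\|_{L^\infty(L^2)}^2\le\|y_\alpha\|_{L^\infty(L^2)}^2$ gives $\|z_\alpha-y_\alpha\|_{L^\infty(L^2)}=\alpha^2\|(z_\alpha)_{xx}\|_{L^\infty(L^2)}\le\alpha\,\|y_\alpha\|_{L^\infty(L^2)}\to0$.

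Next I would extract a subsequence (not relabeled) such that $y_\alpha\rightharpoonup y$ weakly in $L^2(0,T;H^2(0,L))$, weakly-$*$ in $L^\infty(0,T;H_0^1(0,L))$, with $(y_\alpha)_t\rightharpoonup y_t$ weakly in $L^2((0,L)\times(0,T))$; by the Aubin--Lions lemma this yields $y_\alpha\to y$ strongly in $L^2(0,T;H_0^1(0,L))$ and hence in $L^2((0,L)\times(0,T))$ and a.e. From the previous step, $z_\alpha\to y$ in the same topologies (since $z_\alpha-y_\alpha\to0$ in $L^2(L^2)$, and $z_\alpha$ is bounded in, say, $L^2(0,T;H_0^1(0,L))$ by the elliptic estimate $\alpha^2\|(z_\alpha)_x\|_{L^\infty(L^2)}^2\le C$ — actually one only needs boundedness of $z_\alpha$ in $L^2(L^2)$ plus the difference going to zero, since then $z_\alpha\to y$ strongly in $L^2(L^2)$, and to get the $H^1$ convergence claimed one invokes the same argument on $(z_\alpha)_x$ using $z_\alpha$ bounded in $L^2(0,T;H^1)$... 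I will take the cleaner route via boundedness of $z_\alpha$ in $L^2(0,T;H^2)$ coming from $z_\alpha-\alpha^2(z_\alpha)_{xx}=y_\alpha$ with $y_\alpha$ bounded in $L^2(H^2)$, giving $z_\alpha$ bounded in $L^2(H^2)$ uniformly, so $z_\alpha\rightharpoonup y$ weakly in $L^2(H^2)$ and strongly in $L^2(H_0^1)$). Then I pass to the limit in the weak formulation: the only delicate term is $z_\alpha(y_\alpha)_x$, which I treat by writing it as the product of $z_\alpha\to y$ strongly in $L^2((0,L)\times(0,T))$ (indeed in $L^4$ by interpolation with the $L^\infty$ bound) and $(y_\alpha)_x\rightharpoonup y_x$ weakly in $L^2((0,L)\times(0,T))$; the product of a strongly and a weakly convergent sequence converges weakly to the product, so $z_\alpha(y_\alpha)_x\rightharpoonup y y_x$ in $L^1$, which suffices to identify the limit equation. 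Thus $(y,z)=(y,y)$ solves \eqref{P}, and by uniqueness of the solution to \eqref{P} the whole sequence converges, not merely a subsequence.

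It remains to promote weak to strong convergence in $L^2(0,T;H_0^1(0,L))$. The strong convergence $y_\alpha\to y$ in $L^2(H_0^1)$ is in fact already obtained above from Aubin--Lions (the triple $H^2\hookrightarrow\hookrightarrow H_0^1\hookrightarrow L^2$ with $(y_\alpha)_t$ bounded in $L^2(L^2)$), so no energy-identity argument is strictly needed for $y_\alpha$; for $z_\alpha$, strong convergence in $L^2(H_0^1)$ follows because $z_\alpha$ is bounded in $L^2(H^2)$ and $(z_\alpha)_t$ is bounded in $L^2(H^2)$ — hence in $L^2(L^2)$ — so Aubin--Lions applies to $z_\alpha$ as well, and the limit is $y$ by the identification above. \textbf{The main obstacle} I anticipate is making the convergence of the transport term fully rigorous with the nonlocal factor $z_\alpha$: one must be careful that $z_\alpha$, defined through the $\alpha$-dependent Helmholtz operator, genuinely converges strongly (not just weakly) so that the product with the weakly convergent $(y_\alpha)_x$ passes to the limit; the key is the quantitative bound $\|z_\alpha-y_\alpha\|_{L^\infty(L^2)}\le\alpha\|y_\alpha\|_{L^\infty(L^2)}$, which ties the elliptic regularization directly to the $\alpha$-uniform estimates and forces $z_\alpha$ and $y_\alpha$ to share the same limit with the same strong topology. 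Everything else is a routine Galerkin/Gronwall and Aubin--Lions exercise.
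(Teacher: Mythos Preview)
Your overall strategy is sound and matches the paper's for the $y_\alpha$ part: uniform estimates from Proposition~\ref{E-U-B-alpha}, Aubin--Lions compactness to get $y_\alpha\to y$ strongly in $L^2(0,T;H_0^1)$, passage to the limit in the weak formulation, and uniqueness to upgrade from subsequences to the full family. Where you diverge from the paper is in obtaining the strong $L^2(0,T;H_0^1)$ convergence of~$z_\alpha$. The paper does this by a single direct energy estimate: writing $(z_\alpha-y)-\alpha^2(z_\alpha-y)_{xx}=(y_\alpha-y)+\alpha^2 y_{xx}$ and testing against $-(z_\alpha-y)_{xx}$ yields immediately
\[
\|(z_\alpha-y)_x\|_{L^2(L^2)}^2\ \le\ \|(y_\alpha-y)_x\|_{L^2(L^2)}^2+\alpha^2\|y_{xx}\|_2^2,
\]
so no separate compactness argument for $z_\alpha$ is needed at all. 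Your compactness route for $z_\alpha$ is also workable, but two slips should be repaired. First, from the cited bound $\alpha^4\|(z_\alpha)_{xx}\|^2\le\|y_\alpha\|_{L^2}^2$ one only gets $\alpha^2\|(z_\alpha)_{xx}\|\le\|y_\alpha\|_{L^2}$, not $\le\alpha\|y_\alpha\|_{L^2}$; to obtain the $O(\alpha)$ rate for $\|z_\alpha-y_\alpha\|$ you should instead test the elliptic equation against $-(z_\alpha)_{xx}$, which gives $\alpha\|(z_\alpha)_{xx}\|_{2}\le\|(y_\alpha)_x\|_{2}$. Second, $(z_\alpha)_t$ is \emph{not} uniformly bounded in $L^2(H^2)$---the Helmholtz resolvent gains two derivatives only at the price of a factor $\alpha^{-2}$---but it \emph{is} bounded in $L^2(L^2)$, since the resolvent is an $L^2$-contraction applied to $(y_\alpha)_t$, and that is all Aubin--Lions requires. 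Your uniform $L^2(H^2)$ bound on $z_\alpha$ itself is correct but deserves a word of justification (e.g.\ the resolvent commutes with the Dirichlet Laplacian and is a contraction on $K^2(0,L)$). The paper's energy route is shorter and delivers an explicit rate; yours is heavier on machinery but equally valid once these details are fixed.
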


\begin{proof}
	Since $(y_\alpha,z_\alpha)$ is the solution to~\eqref{Palpha}, we have \eqref{yalpha-ineq}.
	Therefore, there exists $y$ such that, at least for a subsequence, we have
\begin{equation}\label{convery}
	\begin{array}{c}
		y_\alpha \to y \hbox{ weakly in } L^2(0, T;H^2(0,L)),					 \\
		\noalign{\smallskip}
		y_\alpha \to y  \hbox{ weakly-}* \hbox{ in } L^\infty(0,T;H_0^1(0,L)),		 \\
		\noalign{\smallskip}
		(y_\alpha)_t \to y_t\hbox{ weakly in } L^2((0,L)\times(0,T)).
	\end{array}
\end{equation}

	The Hilbert space
\[
	Y=\{\, w\in L^2(0, T; K^2(0,L)) : w_t\in L^2((0,L)\times(0,T)) \,\}
\]
	is compactly embedded in $L^2(0, T;H^1_0(0,L))$.	Consequently,
\begin{equation}\label{conv-y-L2H01}
	y_\alpha \to y \hbox{ strongly in } L^2(0, T;H^1_0(0,L)).
\end{equation}

	Let us see that $y$ is the unique solution to~\eqref{P}.
	
	Using the second equation in \eqref{Palpha}, we have
\[
	(z_\alpha-y)-\alpha^2(z_\alpha-y)_{xx}=(y_\alpha-y)+\alpha^2y_{xx}.
\]
	Multiplying this equation by $-(z_\alpha-y)_{xx}$ and integrating in $(0,L)\times(0,T)$, we obtain
\[
	\begin{alignedat}{2}
		\int_0^T\!\!\!\int_0^L |(z_\alpha-y)_x|^2\,dx\,dt~+~& \alpha^2\int_0^T\!\!\!\int_0^L |(z_\alpha-y)_{xx}|^2\,dx\,dt \\
	 	&= \int_0^T\!\!\!\int_0^L (y_\alpha-y)_x(z_\alpha-y)_x\,dx\,dt \\
	 	&-\alpha^2\int_0^T\!\!\!\int_0^L y_{xx}(z_\alpha-y)_{xx}\,dx\,dt,
	\end{alignedat}
\]
	whence
\[
	\int_0^T\!\!\!\int_0^L |(z_\alpha-y)_x|^2\,dx\,dt\leq\int_0^T\!\!\!\int_0^L |(y_\alpha-y)_x|^2\,dx\,dt+\alpha^2\| y_{xx}\|^2_2.
\]	
	This shows that
\begin{equation}\label{conv-z-L2}
	z_\alpha \rightarrow y\hbox{ strongly in } L^2(0, T;H^1_0(0,L))
\end{equation}
 	and, consequently,
\begin{equation}\label{convprod}
	z_\alpha(y_\alpha)_x\to yy_x,\hbox{ strongly in } L^1((0,L)\times(0,T)).
\end{equation}

	Finally, for each $\psi\in L^\infty(0,T;H^1_0(0,L))$, we have
\begin{equation}\label{alphapsi}
	\int_0^T\!\!\!\int_0^L  \left((y_\alpha)_t\psi+(y_\alpha)_x\psi_x+z_\alpha(y_\alpha)_x\psi\right) \,dx\,dt=\int_0^T\!\!\!\int_0^L f\psi\,dx\,dt.
\end{equation}
	Using \eqref{convery} and \eqref{convprod}, we can take limits in all terms and find that
\begin{equation}\label{psi}
	\int_0^T\!\!\!\int_0^L  \left(y_t\psi+y_x\psi_x+yy_x\psi\right)\,dx\,dt=\int_0^T\!\!\!\int_0^L f\psi\,dx\,dt,
\end{equation}
	that is, $y$ is the unique solution to \eqref{P}.
	
	This proves that \eqref{cccconvzy} holds at least for a subsequence.
	But, in view of uniqueness, not only a subsequence but the whole sequence converges.
\end{proof}

\begin{rmq}{\rm
	In fact, a result similar to Proposition~\ref{convergence} can also be established with varying $f$ and $y_0$.
   	More precisely, if we introduce data $f_\alpha$ and $(y_{0})_{\alpha}$ with
\[
	f_\alpha \to f \hbox{ weakly-$*$ in } L^\infty((0,L) \times (0,T))
\]
	and
\[
	(y_{0})_{\alpha}\to y_0 \hbox{ weakly-$*$ in } L^\infty(0,L),
\]
	then we find that the associated solutions $(y_\alpha,z_\alpha)$ satisfy again \eqref{cccconvzy}.
		\Fin}
\end{rmq}

	To end this Section, we will now recall a result dealing with the null controllability of general parabolic linear systems of the form
\begin{equation}\label{parab}
	\left\{
		\begin{array}{lll}
		     y_t - y_{xx} + Ay_x = v1_{(a,b)}        		    & \text{in}  &  (0,L)\times(0,T),  \\
		     y(0,\cdot) = y(L,\cdot) = 0               		    & \text{on} &  (0,T),                   \\
		     y(\cdot,0) = y_0                            	  	    & \text{in}  &  (0,L).
		\end{array}
	\right.
\end{equation}
	where $y_0\in L^2(0,L)$, $A\in L^\infty((0,L)\times(0,T))$ and $v\in L^2((a,b)\times(0,T))$.

	It is well known that there exists exactly one solution $y$ to \eqref{parab}, with
\[
y\in C^0([0, T];L^2(0,L))\cap L^2(0, T;H^1_0(0,L)).
\]
	
	Related to controllabilty result, we have the following:

\begin{thm}\label{NC-Parab}
	The linear system \eqref{parab} is null controllable at any time $T>0$.
	In other words, for each $y_0\in L^2(0,L)$ there exists $v \in L^2((a,b)\times(0,T))$ such that the associated solution to~\eqref{parab} satisfies~\eqref{null_condition}.
	Furthermore, the extremal problem
\begin{equation}\label{optm}
	\!\!\!\!\!
	\left\{
		\begin{array}{l}
			\noalign{\smallskip}\!\! \text{Minimize } \ \displaystyle{\frac{1}{2}\int_0^T\!\!\!\int_a^b|v|^2 \,dx\,dt}\\
			\noalign{\smallskip}\!\! \text{Subject to:  $v \in L^2((a,b)\times(0,T))$, \eqref{parab}, \eqref{null_condition}}
%			\noalign{\smallskip}\!\! \text{Subject to:  $v \in L^2((a,b)\times(0,T))$, the solution to \eqref{parab} satisfies \eqref{null_condition}}
		\end{array}
	\right.
\end{equation}
possesses exactly one solution $\hat v$ satisfying
\begin{equation}\label{est-cont}
	\|\hat v\|_2\leq C_0\|y_0\|_2,
\end{equation}
	where
\[
	C_0=e^{C_1(1+1/T+(1+T)\|A\|^2_\infty)}
\]
	and $C_1$ only depends on $a$, $b$ and $L$.
\end{thm}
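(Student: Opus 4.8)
The plan is to reduce the statement to an \emph{observability inequality} for the adjoint system and to obtain the latter from a global Carleman estimate, keeping careful track of the dependence of the constants on $T$ and $\|A\|_\infty$. By a classical duality argument (see \cite{F-I-2}), the existence of a control for \eqref{parab} together with the estimate \eqref{est-cont} for the solution of \eqref{optm} is equivalent to the inequality
\[
\|\varphi(\cdot,0)\|_2^2\le C_0^2\int_0^T\!\!\!\int_a^b|\varphi|^2\,dx\,dt
\]
for every solution $\varphi$ of the adjoint problem
\[
\left\{
\begin{array}{lll}
-\varphi_t-\varphi_{xx}-(A\varphi)_x=0 & \text{in} & (0,L)\times(0,T),\\
\varphi(0,\cdot)=\varphi(L,\cdot)=0 & \text{on} & (0,T),\\
\varphi(\cdot,T)=\varphi_T & \text{in} & (0,L),
\end{array}
\right.
\]
with $\varphi_T\in L^2(0,L)$ (well-posedness of \eqref{parab} and of this backward problem in the indicated spaces is standard). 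Indeed, once the observability inequality holds, the set of admissible controls in \eqref{optm} is a nonempty closed affine subspace of $L^2((a,b)\times(0,T))$, so \eqref{optm} has a unique solution $\hat v$; moreover $\hat v=\hat\varphi\,1_{(a,b)}$, where $\hat\varphi$ is the adjoint state associated with the minimizer $\hat\varphi_T$ of the strictly convex, coercive functional $\varphi_T\mapsto\frac12\int_0^T\!\!\int_a^b|\varphi|^2\,dx\,dt+\int_0^L y_0\,\varphi(\cdot,0)\,dx$ on $L^2(0,L)$, and the Euler--Lagrange relation for this functional combined with the observability inequality gives exactly \eqref{est-cont} with the stated $C_0$.

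For the observability inequality I would use a global Carleman estimate for the backward heat operator $\varphi\mapsto-\varphi_t-\varphi_{xx}$. Fix $\eta^0\in C^2([0,L])$ with $\eta^0>0$ in $(0,L)$, $\eta^0=0$ on $\{0,L\}$ and $|\eta^0_x|>0$ outside a compact set $\omega_0\Subset(a,b)$, and, for parameters $s,\lambda>0$, introduce the usual weights
\[
\alpha(x,t)=\frac{e^{2\lambda\|\eta^0\|_\infty}-e^{\lambda(\|\eta^0\|_\infty+\eta^0(x))}}{t(T-t)},\qquad
\xi(x,t)=\frac{e^{\lambda(\|\eta^0\|_\infty+\eta^0(x))}}{t(T-t)}.
\]
Since $A$ is only $L^\infty$ in space, I would not expand $(A\varphi)_x$ but rather write the adjoint equation as $-\varphi_t-\varphi_{xx}=\partial_x(A\varphi)$ and apply the version of the Carleman inequality whose right-hand side is a spatial derivative of an $L^2$ function: this bounds $\iint e^{-2s\alpha}\big[(s\xi)^{-1}(|\varphi_t|^2+|\varphi_{xx}|^2)+s\lambda^2\xi|\varphi_x|^2+s^3\lambda^4\xi^3|\varphi|^2\big]$ by $C\big(s^2\lambda^2\iint e^{-2s\alpha}\xi^2|A\varphi|^2+s^3\lambda^4\iint_{\omega_0}e^{-2s\alpha}\xi^3|\varphi|^2\big)$, valid for all $s\ge\sigma_0(T+T^2)$ and $\lambda\ge\lambda_0$. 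Because $|A\varphi|^2\le\|A\|_\infty^2|\varphi|^2$ and $\xi\ge 4e^{\lambda\|\eta^0\|_\infty}T^{-2}$, after fixing $\lambda$ the potential term is absorbed into the left-hand side as soon as $s\ge\sigma_1(T+T^2)(1+\|A\|_\infty^2)$, which I then impose.

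It remains to convert this weighted estimate into the plain observability inequality. Restricting the time integration on the left to $(T/4,3T/4)$ — where $e^{-2s\alpha}\xi^3$ is bounded below by a quantity of the form $c\,T^{-6}e^{-cs/T^2}$ — and bounding $e^{-2s\alpha}\xi^3$ from above on the right by $C\,T^{-6}$, one gets $\int_{T/4}^{3T/4}\!\!\int_0^L|\varphi|^2\,dx\,dt\le e^{Cs/T^2}\int_0^T\!\!\int_a^b|\varphi|^2\,dx\,dt$. On the other hand, the standard energy estimate for the adjoint system yields $\|\varphi(\cdot,0)\|_2^2\le e^{CT(1+\|A\|_\infty^2)}\|\varphi(\cdot,t)\|_2^2$ for every $t\in(0,T)$; integrating this over $t\in(T/4,3T/4)$ and inserting the previous bound gives $\|\varphi(\cdot,0)\|_2^2\le e^{CT(1+\|A\|_\infty^2)}T^{-1}e^{Cs/T^2}\int_0^T\!\!\int_a^b|\varphi|^2\,dx\,dt$. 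With $s=\sigma_1(T+T^2)(1+\|A\|_\infty^2)$ one has $s/T^2\le C(1+1/T)(1+\|A\|_\infty^2)$, and collecting all exponents produces the observability constant $C_0^2=e^{2C_1(1+1/T+(1+T)\|A\|_\infty^2)}$ with $C_1$ depending only on $a$, $b$, $L$, as claimed.

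The main obstacle, I expect, is the bookkeeping in this last stage: the parameters $s$ and $\lambda$ must be chosen so that the first-order term is absorbed \emph{and} so that the final constant has exactly the dependence $e^{C_1(1+1/T+(1+T)\|A\|_\infty^2)}$. In particular, the $L^\infty$ (hence non-differentiable) character of $A$ forces the use of the ``$\partial_x$ of an $L^2$ function'' form of the Carleman inequality rather than the basic one, and it is the $\xi^2$-weight appearing there, combined with the lower bound $\xi\gtrsim T^{-2}$, that is responsible both for the power $\|A\|_\infty^2$ (rather than $\|A\|_\infty$) and, together with the energy estimate and the restriction in $t$, for the factors $1+1/T$ and $1+T$ in the exponent.
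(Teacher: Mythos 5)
The paper does not prove Theorem~\ref{NC-Parab} itself; it cites \cite{ImanuvilovYamamoto}, and your argument --- duality reducing \eqref{est-cont} to the observability inequality for the adjoint system, then a global Carleman estimate applied to $-\varphi_t-\varphi_{xx}=\partial_x(A\varphi)$ with the right-hand side kept in divergence form precisely because $A$ is only $L^\infty$, and the parameter choice $s\sim(T+T^2)(1+\|A\|_\infty^2)$ producing $C_0=e^{C_1(1+1/T+(1+T)\|A\|_\infty^2)}$ --- is exactly a sketch of the proof in that reference. One cosmetic slip worth fixing: the backward energy estimate gives $\|\varphi(\cdot,0)\|_2^2\le e^{CT\|A\|_\infty^2}\|\varphi(\cdot,t)\|_2^2$ (the heat part is dissipative, so there is no bare $e^{CT}$ factor), which is needed for the final exponent to have the stated form $1+1/T+(1+T)\|A\|_\infty^2$ rather than an extra additive $T$.
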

		
	The proof of this result can be found in~\cite{ImanuvilovYamamoto}.

%%%%%%%%%%%%%%%%%%%%%%%%%%%%%%%%%%%%%%%%
%%%%%%%%%%%%%%%%%%%%%%%%%%%%%%%%%%%%%%%%
%%%%%%%%%%%%%%%%%%%%%%%%%%%%%%%%%%%%%%%%
%%%%%%%%     SECTION 3    %%%%%%%%%%%%%%%%%%%%%%%
%%%%%%%%%%%%%%%%%%%%%%%%%%%%%%%%%%%%%%%%
%%%%%%%%%%%%%%%%%%%%%%%%%%%%%%%%%%%%%%%%
%%%%%%%%%%%%%%%%%%%%%%%%%%%%%%%%%%%%%%%%

\section[Controllability of the Burgers-$\alpha$ mo\-del]{Local null controllability of the Burgers-$\alpha$ mo\-del}\label{Sec3}

	In this Section, we present the proof of Theorem~\ref{NC-Burgers}.

	Roughly speaking, we fix $\overline y$, we solve~\eqref{elipt-pde}, we control exactly to zero the linear system \eqref{parab} with $A = z$  and we set  $\Lambda_\alpha(\overline{y}) = y$.
	Then the task is to solve the fixed point  equation $y = \Lambda_\alpha(y)$.
	
	Several fixed point theorems can be applied.
	In this paper, we have preferred to use Schauder's Theorem, although other results also lead to the good conclusion;
	for instance, an argument relying on Kakutani's Theorem, like in~\cite{AD-FC-MB-Z}, is possible.
	
	As mentioned above, in order to get good properties for $\Lambda_\alpha$, it is very appropriate
	that the control belongs to $L^\infty$. This can be achieved by several ways;
	for instance, using an ``improved'' observability estimate for the solutions to the adjoint of~\eqref{parab} and arguing as in~\cite{AD-FC-MB-Z}.
	We have preferred here to use other techniques that rely on the regularity of the states and were originally used in~\cite{MBurgos-0};
	see also~\cite{MBurgos}.

	Let $y_0 \in H_0^1(0,L)$ and $a'$, $a''$, $b'$ and~$b''$ be given, with $0 < a < a'< a'' < b'' < b' < b < L$.
	Let $\theta$ and $\eta$ satisfy
\[
	\theta\in C^\infty([0,T]),~\theta \equiv 1~\text{in}~[0,T/4],~\theta\equiv 0~\text{in}~[3T/4,T],
\]
\[
	\eta\in\mathcal{D}(a,b),~\eta\equiv 1~\text{in a neighborhood of}~[a',b'],~0\leq\eta\leq 1.
\]

	As in the proof of Proposition~\ref{E-U-B-alpha}, we can associate to each $\overline y\in L^{\infty}((0,L)\times(0,T))$ the function $z$ through~\eqref{elipt-pde}.
	Recall that $z, z_x\in L^{\infty}((0,L)\times(0,T))$ and the inequalities \eqref{z-ineq} are satisfied. 	
	In view of Theorem~\ref{NC-Parab}, we can associate to $z$ the null control $\hat v$ of minimal norm in~$L^2((a'',b'') \times (0,T))$, that is,
	the solution to~\eqref{parab}--\eqref{optm} with $a$, $b$ and $A$ respectively replaced by $a''$, $b''$ and $z$.
	Let us denote by $\hat y$ the corresponding solution to~\eqref{parab}.

	Then, we can write that $\hat y=\theta(t)\hat u + \hat w$, where $\hat u$ and $\hat w$ are the unique solutions to the linear systems
\begin{equation}\label{syst u}
	\left\{
		\begin{array}{lll}
			 \hat u_t  - \hat u_{xx} +z\hat  u_x = 0    	   & \text{in}&  (0,L)\times(0,T),  \\
			 \hat u(0,\cdot) = \hat u(L,\cdot) = 0     	   & \text{on}&  (0,T), 		 \\
			 \hat u(\cdot,0) = y_0                        		   & \text{in}&  (0,L)
		\end{array}
	\right.
\end{equation}
and
\begin{equation}\label{syst tildew}
	\left\{
		\begin{array}{lll}
			 \hat  w_t  -  \hat  w_{xx} +z \hat w_x = \hat v1_{( a'', b'')}-\theta_t\hat u                & \text{in}&  (0,L)\times(0,T),  \\
			 \hat  w(0,\cdot) =  \hat  w(L,\cdot) = 0                                                  		     & \text{on}&  (0,T),                  \\
			 \hat  w(\cdot,0) = 0,~\hat w(\cdot,T) = 0                                               		     & \text{in}&  (0,L),
		\end{array}
	\right.
\end{equation}
	respectively.
	
	If we now set $w:= (1-\eta(x)) \hat w$, then we have that $w$ is the unique solution of the parabolic system
\begin{equation}\label{syst w}
	\left\{
		\begin{array}{lll}
	 		 w_t  - w_{xx} +z w_x =v-\theta_t\hat u             & \text{in}&  (0,L)\times(0,T),  \\
	 		 w(0,\cdot) = w(L,\cdot) = 0                               & \text{on}&  (0,T), \\
	 		 w(\cdot,0) = 0,~ w(\cdot,T) = 0                         & \text{in}&  (0,L),
		\end{array}
	\right.
\end{equation}	
	where $v:=\eta\theta_t\hat u-\eta_xz\hat  w+2\eta_x\hat  w_x+\eta_{xx}\hat  w+(1-\eta(x))\hat  v1_{(a'',b'')}$.

	Notice that $(1-\eta)\hat  v1_{(a'',b'')}\equiv 0$, since $\eta \equiv 1$ in $[a',b']$.
	Therefore, one has
\begin{equation}\label{controlinfty}
	v=\eta\theta_t\hat u-\eta_xz\hat w+2\eta_x\hat w_x+\eta_{xx}\hat w
\end{equation}
	and	then $supp~v\subset (a,b)$.
		
	Let us prove that $v \in L^\infty((a,b)\times(0,T))$ and
\begin{equation}\label{ineq}
	\|v\|_\infty\leq \hat C \|y_0\|_\infty,
\end{equation}
	for some
\begin{equation}\label{ineqp}
	\hat C= e^{C(a,b,L)(1+1/T+(1+T)\|\overline y\|^2_\infty)}.
\end{equation}
	
	First, note that $\hat u\in L^\infty((0,L)\times(0,T))$ and $\|\hat u\|_\infty\leq \|y_0\|_\infty$. Defining
\[
	G=(a,a^{\prime})\cup(b^{\prime},b),
\]
	we see that it suffices to check that $\eta_xz\hat w$, $\eta_x\hat{w}_{x}$ and~$\eta_{xx}\hat{w}$ belong to~$L^{\infty}(G\times(0,T))$,
	with norms in~$L^{\infty}(G\times(0,T))$ bounded by a constant times the $L^2$-norm of $\hat{v}$ and the $L^\infty$-norm of~$y_{0}$,
	since $\eta_x$ and $\eta_{xx}$ are identically zero in a neighborhood of $[a',b']$.
	
	From the usual parabolic estimates for \eqref{syst tildew} and~the estimate \eqref{z-ineq}, we first obtain that
\begin{equation}\label{global wtilde}
	\|\hat w_t\|_{L^2(L^2)}+\|\hat w\|_{L^2(H^2)}+\|\hat w\|_{L^\infty(H^1_0)}\leq
	\|\hat v1_{(a'', b'')}-\theta_t\hat u\|_{L^2(L^2)}e^{C\|\overline y\|^2_\infty}.
\end{equation}	
%	Since $\hat w\in L^2(0,T;H^2(0,L))$ and $\hat w_t\in L^2(0,T;L^2(0,L))$, we have $\hat w\in C^0([0,T];H^1(0,L))$ and
%\begin{equation}\label{wtilde-infty}
%	\begin{alignedat}{2}
%		\|\hat w\|_{C^0([0,T];H^1(0,L))}\leq&~ C(\|\hat w_t\|_{L^2(L^2)}+\|\hat w\|_{L^2(H^2)})						\\
%	                                  			\leq&~ C\|\hat v1_{(a'', b'')}-\theta_t\hat u\|_{L^2(L^2)}e^{C\|\overline y\|^2_\infty}.
%	\end{alignedat}
%\end{equation}
	In particular, we have $\hat w \in L^\infty((a,b)\times(0,T))$, with appropriate estimates.
	
	{
	On the other hand, $\theta_t \hat u \in L^\infty((0,L)\times(0,T))$ and, from the equation satisfied by $\hat w$,  we have
\[
	\hat  w_t  -  \hat  w_{xx} +z \hat w_x = -\theta_t\hat u \ \ \text{in} \ \ [(0,a'')\cup (b'',L)]\times(0,T).
\]
	
	Hence, from standard
	(local in space) parabolic estimates, we deduce that $\hat w$ belongs to the space~$X^p(0,T;G)=\{\hat w \in L^p(0,T;W^{2,p}(G)):\hat w_t\in L^p(0,T;L^p(G))\}$
	for all $2<p<+\infty$.

	Then, using Lemma~3.3 (p.~80) of~\cite{Ladyzhenskaja}, we can take $p > 3$ to get the embedding
	$X^p(0,T;G) \hookrightarrow C^0([0,T];C^1(\overline G))$ and~$\hat w_x \in C^0(\overline G\times [0,T])$.
	} This proves that $\hat w_x \in L^\infty(G)$, again with the appropriate estimates.
	
% Proposition $2.1$ and the Lemma $2.2$ of the reference \cite{MBurgos},
	
	Therefore, if we define $y:=\theta(t)\hat u + w$, one has
\begin{equation}\label{Burgers null-infty}
	\left\{
		\begin{array}{lll}
			 y_t  - y_{xx} +z y_x = v1_{( a, b)} 	 & \text{in} &  (0,L)\times(0,T),  	\\
			 y(0,\cdot) = y(L,\cdot) = 0         	 & \text{on}&  (0,T), 				\\
			 y(\cdot,0) = y_0                   		 & \text{in} &  (0,L),
		\end{array}
	\right.
\end{equation}	
	and~\eqref{null_condition}.
	Moreover, the control $v$ satisfies~\eqref{ineq}--\eqref{ineqp}.

	Let us set $\Lambda_\alpha(\overline y) = y$.
	In this way, we have been able to introduce a mapping
\[
	\Lambda_\alpha : L^{\infty}((0,L)\times(0,T)) \mapsto L^{\infty}((0,L)\times(0,T))
\]
	for which the following properties are easy to check:

\begin{itemize}

\item[a)]
	$\Lambda_\alpha$ is continuous and compact.
	{
	The compactness can be explained as follows:
	if $B \subset L^{\infty}((0,L)\times(0,T))$ is bounded, then $\Lambda_\alpha(B)$ is bounded in the space $W$ in~\eqref{spaceW} and, therefore, it is relatively compact in~$L^{\infty}((0,L)\times(0,T))$, in view of classical results of the Aubin-Lions' kind, see for instance~\cite{Simon}).
	}	
	
\item[b)]
	If $R>0$ and $\|y_0\|_{\infty}\leq \varepsilon(R)$ (independent of $\alpha$!), then $\Lambda_\alpha$ maps the ball
	$B_R:=\{\, \overline y\in L^\infty((0,L)\times(0,T)) : \|\overline y\|_{\infty}\leq R \,\}$ into itself.
	
\end{itemize}	
	
	The consequence is that, again, Schauder's Theorem can be applied and there exists controls $v_\alpha\in L^\infty((0,L)\times(0,T))$ such that
	the corresponding solutions to~\eqref{CPalpha} satisfy \eqref{null_condition}.	This achieves the proof of Theorem~\ref{NC-Burgers}.
	
%%%%%%%%%%%%%%%%%%%%%%%%%%%%%%%%%%%%%%%%
%%%%%%%%%%%%%%%%%%%%%%%%%%%%%%%%%%%%%%%%
%%%%%%%%%%%%%%%%%%%%%%%%%%%%%%%%%%%%%%%%
%%%%%%%%     SECTION 4    %%%%%%%%%%%%%%%%%%%%%%%
%%%%%%%%%%%%%%%%%%%%%%%%%%%%%%%%%%%%%%%%
%%%%%%%%%%%%%%%%%%%%%%%%%%%%%%%%%%%%%%%%
%%%%%%%%%%%%%%%%%%%%%%%%%%%%%%%%%%%%%%%%

\section[Large time null controllability]{Large time null controllability of the Burgers-$\alpha$ system} \label{Sec4}

	The proof of Theorem~\ref{Burgers-LargeT} is similar.
	It suffices to replace the assumption ``$y_0$ is small'' by an assumption imposing that $T$ is large enough.
	Again, this makes it possible to apply a fixed point argument.
	
{
	More precisely, let us accept that, if $y_0\in H^1_0(0,L)$ and~$\|y_0\|_\infty < \pi/L$, then the associted uncontrolled solution $y_\alpha$ to~\eqref{CPalpha} satisfies
\begin{equation}\label{decay0}
	\|y_\alpha(\cdot\,,t)\|_{H_0^1} \leq C(y_0) e^{-{1\over2}((\pi/L)^2 - \|y_0\|^2_\infty)t}
\end{equation}
	where $C(y_0)$ is a constant only depending on~$\|y_0\|_\infty$ and~$\|y_0\|_{H_0^1}$.
	Then, if we first take $v \equiv 0$, the state $y_\alpha(\cdot\,,t)$ becomes small for large $t$.
	In a second step, when $\|y_\alpha(\cdot\,,t)\|_{H_0^1}$ is sufficiently small, we can apply Theorem~\ref{NC-Burgers} and drive the state exactly to zero.

	Let us now see that \eqref{decay0} holds.
	Arguing as in the proof of Proposition~\ref{E-U-B-alpha}, we see that
\begin{equation}\label{eq1}
	\frac{d}{dt}\|y_\alpha\|_2^2+\|(y_\alpha)_x\|^2_2\leq \|y_0\|^2_\infty\|y_\alpha\|^2_2
\end{equation}
	 and, using Poincar\'{e}'s inequality, we obtain:
\[
	\frac{d}{dt}\|y_\alpha\|^2_2+ (\pi/L)^2 \|y_\alpha\|^2_2\leq \|y_0\|^2_\infty\|y_\alpha\|^2_2.
\]
	Let us introduce $r={1\over2}((\pi/L)^2 - \|y_0\|^2_\infty)$.
	It then follows that
\begin{equation}\label{eq2}
	\|y_\alpha(\cdot\,,t)\|_2^2 \leq \|y_0\|^2_2 e^{-2rt}.
\end{equation}	
	Hence, by combining  \eqref{eq1} and \eqref{eq2}, it is easy to see that
\[
	\frac{d}{dt} \left(e^{rt}\|y_\alpha\|^2_2 \right)+e^{rt}\|(y_\alpha)_x\|^2_2\leq (r+\|y_0\|^2_\infty)\|y_0\|^2_2 \, e^{-rt}.
\]	
	Integrating from $0$ to $t$ yields	
\begin{equation}\label{ultima}
	\int_0^te^{r\sigma}\|(y_\alpha)_x\|^2_2 \, d\sigma \leq \bigg(2+\frac{\|y_0\|^2_\infty}{r}\bigg)\|y_0\|^2_2.
\end{equation}
	Now, we take the $L^2$-inner product of \eqref{Palpha} and~$-(y_\alpha)_{xx}$ and get
\[
	\frac{d}{dt}\|(y_\alpha)_x\|_2^2\leq \|y_0\|^2_\infty\|(y_\alpha)_x\|^2.
\]	
	Multiplying this inequality by $e^{rt}$, we deduce that
\[
	\frac{d}{dt}\left(e^{rt}\|(y_\alpha)_x\|^2_2\right) \leq (r+\|y_0\|^2_\infty)\,e^{rt}\,\|(y_\alpha)_x\|^2_2
\]	
and, consequently, we see from~\eqref{ultima} that
\[
	\|(y_\alpha)_x(\cdot\,,t)\|^2_2 \leq \bigg[(r+\|y_0\|^2_\infty)\bigg(2+\frac{\|y_0\|^2_\infty}{r}\bigg)\|y_0\|^2_2+\|y_0\|^2_{H_0^1}\bigg]e^{-rt},
\]
	which implies \eqref{decay0}.
	
\begin{rmq}{\rm
	To our knowledge, it is unknown what can be said when the smallness assumption $\|y_0\|_\infty < \pi/L$ is not satisfied.
	In fact, it is not clear whether or not the solutions to~\eqref{CPalpha} with large initial data and $v \equiv 0$ decay as $t \to +\infty$.
	\Fin}
\end{rmq}

}	

%%%%%%%%%%%%%%%%%%%%%%%%%%%%%%%%%%%%%%%%
%%%%%%%%%%%%%%%%%%%%%%%%%%%%%%%%%%%%%%%%
%%%%%%%%%%%%%%%%%%%%%%%%%%%%%%%%%%%%%%%%
%%%%%%%%     SECTION 5    %%%%%%%%%%%%%%%%%%%%%%%
%%%%%%%%%%%%%%%%%%%%%%%%%%%%%%%%%%%%%%%%
%%%%%%%%%%%%%%%%%%%%%%%%%%%%%%%%%%%%%%%%
%%%%%%%%%%%%%%%%%%%%%%%%%%%%%%%%%%%%%%%%

\section{Controllability in the limit}\label{Sec5}

	In this Section, we are going to prove Theorem~\ref{conver-cont}.
	
	For the null controls $v_\alpha$ furnished by Theorem~\ref{NC-Burgers} and the associated solutions $(y_\alpha,z_\alpha)$ to~\eqref{CPalpha},
	we have the uniform estimates \eqref{ineq} and \eqref{yalpha-ineq} with $f = v_\alpha 1_{(a,b)}$. Then, there exists $y\in L^2(0, T;K^2(0,L))$, with
	$y_t\in L^2((0,L)\times(0,T))$, and $v\in L^\infty((a,b)\times(0,T))$ such that, at least for a subsequence, one has:
\begin{equation}\label{converycontrol}
	\begin{array}{l}
		y_\alpha  \to y \hbox{ weakly in } L^2(0, T;K^2(0,L)),\\
		\noalign{\smallskip}
		(y_\alpha)_t \to y_t \hbox{ weakly in } L^2((0,L)\times(0,T))\\
		\noalign{\smallskip}
		v_\alpha  \to v \hbox{ weakly}-* \hbox{ in } L^\infty((a,b)\times(0,T)).
	\end{array}
\end{equation}
	As before, the Aubin-Lions' Lemma implies that
\begin{equation}\label{conv-y-L2H01-control}
	y_\alpha \to y \hbox{ strongly in } L^2(0, T;H^1_0(0,L)).
\end{equation}
	Using the second equation in \eqref{CPalpha}, we see that
\[
	(z_\alpha-y)-\alpha^2(z_\alpha-y)_{xx}=(y_\alpha-y)+\alpha^2y_{xx}.
\]
	Multiplying this equation by $-(z_\alpha-y)_{xx}$ and integrating in $(0,L)\times(0,T)$, we deduce
\[
	\begin{alignedat}{2}
		\int_0^T\!\!\!\int_0^L |(z_\alpha-y)_x|^2\,dx\,dt &+ \alpha^2\int_0^T\!\!\!\int_0^L |(z_\alpha-y)_{xx}|^2\,dx\,dt \\
		\noalign{\smallskip}
		&= \int_0^T\!\!\!\int_0^L (y_\alpha-y)_x(z_\alpha-y)_x\,dx\,dt \\
		\noalign{\smallskip}
		&-\alpha^2\int_0^T\!\!\!\int_0^L y_{xx}(z_\alpha-y)_{xx}\,dx\,dt.
	\end{alignedat}
\]
	Whence,
\[
	\int_0^T\!\!\!\int_0^L |(z_\alpha-y)_x|^2\,dx\,dt\leq\int_0^T\!\!\!\int_0^L |(y_\alpha-y)_x|^2\,dx\,dt+\alpha^2\| y_{xx}\|^2_2.
\]	
	This shows that
\begin{equation}\label{conv-z-L2-control}
	z_\alpha \rightarrow y\hbox{ strongly in } L^2(0, T;H^1_0(0,L)).
\end{equation}
 	and the transport terms in \eqref{CPalpha} satisfy
\begin{equation}\label{convprodcontrol}
	z_\alpha(y_\alpha)_x \to yy_x \hbox{ strongly in } L^1((0,L)\times(0,T)).
\end{equation}
	In this way, for each $\psi\in L^\infty(0,T;H^1_0(0,L))$, we obtain
\begin{equation}\label{alpha-psi-cont}
	\int_0^T\!\!\!\int_0^L  \left((y_\alpha)_t\psi+(y_\alpha)_x\psi_x+z_\alpha(y_\alpha)_x\psi\right)\,dx\,dt=\int_0^T\!\!\!\int_0^Lv_\alpha1_{(a,b)}\psi\,dx\,dt.
\end{equation}
	Using \eqref{converycontrol} and \eqref{convprodcontrol}, we can pass to the limit, as $\alpha\to0^+$, in all the terms of
	\eqref{alpha-psi-cont} to find
\begin{equation}\label{psi-cont}
	\int_0^T\!\!\!\int_0^L  \left(y_t\psi+y_x\psi_x+yy_x\psi\right)\,dx\,dt=\int_0^T\!\!\!\int_0^L v1_{(a,b)}\psi\,dx\,dt,
\end{equation}
	that is, $y$ is the unique solution of \eqref{CP} and $y$ satisfies \eqref{null_condition}.

%%%%%%%%%%%%%%%%%%%%%%%%%%%%%%%%%%%%%%%%
%%%%%%%%%%%%%%%%%%%%%%%%%%%%%%%%%%%%%%%%
%%%%%%%%%%%%%%%%%%%%%%%%%%%%%%%%%%%%%%%%
%%%%%%%%     SECTION 6    %%%%%%%%%%%%%%%%%%%%%%%
%%%%%%%%%%%%%%%%%%%%%%%%%%%%%%%%%%%%%%%%
%%%%%%%%%%%%%%%%%%%%%%%%%%%%%%%%%%%%%%%%
%%%%%%%%%%%%%%%%%%%%%%%%%%%%%%%%%%%%%%%%

\section{Additional comments and questions}\label{Sec6}

%%%%
%%%%
%%%% NEW SUBSECTION
%%%%
%%%%

\subsection{A boundary controllability result}

	We can use an extension argument to prove local boundary controllability results similar to those above.
	
	For instance, let us see that the analog of Theorem~\ref{NC-Burgers}  remains true.
	Thus, let us introduce the controlled system
\begin{equation}\label{BCPalpha}
	\left\{
		\begin{array}{lll}
			y_t - y_{xx} + z y_x = 0                                         		& \text{in}  &  (0,L)\times(0,T),  \\
			z - \alpha^2 z_{xx} = y                                          		& \text{in} 	 &  (0,L)\times(0,T),  \\
			z(0,\cdot)  = y(0,\cdot) =  0,~z(L,\cdot)= y(L,\cdot) =u      & \text{on} &  (0,T),             \\
			y(\cdot,0) = y_0                                                 		& \text{in}  &  (0,L),
		\end{array}
	\right.
\end{equation}
	where $u = u(t)$ stands for the control function and $y_0\in H_0^1(0,L)$ is given.

	Let $a$, $b$ and $\tilde L$ be given, with $L<a<b<\tilde L$. Then, let us define $\tilde y_0 : [0,\tilde L] \mapsto \mathbb{R}$, with $\tilde y_0 := y_01_{[0,L]}$.
	Arguing as in~Theorem~\ref{NC-Burgers}, it can be proved that there exists $(\tilde y, \tilde v)$, with $\tilde v \in L^\infty((a,b) \times (0,T))$,
\[
\left\{
\begin{array}{lll}
	\tilde y_t - \tilde y_{xx} + z 1_{[0,L]} \,\tilde y_x = \tilde v 1_{(a,b)}            &\text{in}&(0,\tilde L)\times(0,T),\\
	 z - \alpha^2 z_{xx} = \tilde y                                                       &\text{in}&(0, L)\times(0,T),          \\
	\tilde y(0,\cdot) = z(0,\cdot) =\tilde y(\tilde L,\cdot) = 0  &\text{on}&(0,T),                      \\
	 z( L,\cdot) =    \tilde y( L,\cdot)         &\text{on}&(0,T),                      \\
	\tilde y(\cdot,0) =\tilde  y_0                                                        &\text{in}&(0,\tilde L),           \\
\end{array}
\right.
\]
and~$\tilde y(x,T) \equiv 0$.
	Then, $y:=\tilde y 1_{(0,L)}$, $z$ and $u(t):= \tilde y(L,t)$ satisfy \eqref{BCPalpha}.
	
	{
	Notice that the control that we have obtained satisfies $u \in C^0([0,T])$, since it can be viewed as the lateral trace of a strong solution of the heat equation with a $L^\infty$ right hand side.
	}
	
%%%%
%%%%
%%%% NEW SUBSECTION
%%%%
%%%%

\subsection{No global null controllability?}

	To our knowledge, it is unknown whether a general global null controllability result holds for \eqref{CPalpha}.
	We can prove global null controllability ``for large $\alpha$''.
	
	More precisely,  the following holds:

\begin{thm}\label{NC-large-alpha}
	Let $y_0\in H^1_0(0,L)$ and $T>0$ be given.
	There exists $\alpha_0=\alpha_0(y_0,T)$ such that \eqref{CPalpha} can be controlled to zero for all $\alpha>\alpha_0$.
\end{thm}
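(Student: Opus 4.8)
The plan is to exploit the fact that, as $\alpha\to+\infty$, the elliptic problem $z-\alpha^2 z_{xx}=y$ forces $z$ to be small; indeed, multiplying by $z$ and integrating gives $\|z\|_2^2+\alpha^2\|z_x\|_2^2\le\|y\|_2\|z\|_2$, so $\|z\|_2\le\|y\|_2$ and, more usefully, $\alpha^2\|z_x\|_2^2\le\|y\|_2^2$, i.e. $\|z_x\|_2\le\alpha^{-1}\|y\|_2$. Thus the transport coefficient $z$ in the first equation of~\eqref{CPalpha}, which plays the role of $A$ in the linear theory of Theorem~\ref{NC-Parab}, becomes \emph{uniformly small in a strong norm} for large $\alpha$. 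The strategy is therefore the same fixed-point scheme as in the proof of Theorem~\ref{NC-Burgers} — associate to $\overline y$ the function $z$ via~\eqref{elipt-pde}, solve the null-controllability problem for the linear system~\eqref{parab} with $A=z$ using the construction that yields an $L^\infty$ control supported in $(a,b)$, set $\Lambda_\alpha(\overline y)=y$, and look for a fixed point via Schauder's theorem — except that now $y_0$ is \emph{arbitrary} and the smallness needed to close the fixed point is extracted from $\alpha$ being large rather than from $\|y_0\|_\infty$ being small.

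First I would make precise the dependence on $\alpha$ in the control estimate. Rerunning the construction of Section~\ref{Sec3} with $A=z$, the control $v$ given by~\eqref{controlinfty} satisfies $\|v\|_\infty\le\hat C\|y_0\|_\infty$ with $\hat C=\exp\!\big(C(a,b,L)(1+1/T+(1+T)\|z\|_\infty^2)\big)$, and by Sobolev embedding in one dimension $\|z\|_\infty\le C\|z\|_{H^1}\le C(\|z\|_2+\|z_x\|_2)\le C(1+\alpha^{-1})\|\overline y\|_2\le C'\|\overline y\|_2$ for $\alpha\ge1$. Hence on the ball $B_R=\{\|\overline y\|_\infty\le R\}$ one has $\|z\|_\infty\le C'(L)^{1/2}R$ uniformly, and the resulting state $y=\Lambda_\alpha(\overline y)$ obeys $\|y\|_\infty\le M(T)=\hat C\,T\,\|y_0\|_\infty\cdot(\text{const})$ with $\hat C$ depending on $R$ but \emph{not} on $\alpha$ — which by itself is not yet enough. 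The gain from large $\alpha$ must instead be inserted into the \emph{a priori bound} for $y$: since $\|y\|_\infty\le\|y_0\|_\infty+C(1+T)^{1/2}\|v\|_\infty$ type estimates hold, I would rather argue that for $\alpha$ large the whole construction can be carried out on a fixed ball $B_R$ with $R$ chosen in advance (depending only on $y_0$ and $T$), because the only place $\alpha$ could hurt is through $\|z\|_\infty$, and that is controlled by $R$ independently of $\alpha$; conversely the place where $\alpha$ \emph{helps} is that $z$ close to the small function $\alpha^{-1}\times(\text{bounded})$ keeps the nonlinear perturbation $zy_x$ small. Concretely I would fix $R:=2(\text{explicit bound on }\|\hat u\|_\infty+\|w\|_\infty\text{ with }z\equiv0)$, then show that for $\alpha\ge\alpha_0(y_0,T)$ the correction coming from $z\not\equiv0$ moves $\Lambda_\alpha(\overline y)$ by at most $R/2$ in $L^\infty$, so that $\Lambda_\alpha(B_R)\subset B_R$; continuity and compactness of $\Lambda_\alpha$ are exactly as in Sections~\ref{Sec2}--\ref{Sec3} (Aubin--Lions), and Schauder then furnishes a fixed point, hence a control $u$ (via the boundary-extension device of Section~6.1 if one wants the boundary version, or directly the distributed control).

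The main obstacle is the one just flagged: turning "large $\alpha$" into genuine smallness of the right quantity. The naive bound $\|z\|_\infty\le\|\,\overline y\,\|_\infty$ from~\eqref{z-ineq} does \emph{not} decay in $\alpha$, so one cannot simply say "$A$ is small." The honest way is to track $z$ in $H^1_0$: $\|z\|_{H^1_0}\le C\alpha^{-1}\|\overline y\|_2$, so $z\to0$ strongly in $H^1_0$, hence (1D Sobolev) in $L^\infty$, \emph{uniformly for $\overline y$ in any fixed $L^2$-bounded set}. Feeding this into the observability/control estimate of Theorem~\ref{NC-Parab} shows $C_0=\exp(C_1(1+1/T+(1+T)\|z\|_\infty^2))\to\exp(C_1(1+1/T))$ as $\alpha\to\infty$ on $B_R$, a bound independent of $R$ in its exponent; combined with $\|\hat v\|_2\le C_0\|y_0\|_2$ this yields $\|v\|_\infty\le C_\star(y_0,T)$ with $C_\star$ independent of $\alpha$ for $\alpha$ large, and then $\|\Lambda_\alpha(\overline y)\|_\infty\le C_\star(y_0,T)=:R$, closing the argument by choosing $\alpha_0$ so that $C'(L)^{1/2}R\cdot\alpha_0^{-1}$ is below the threshold where the above exponents stabilize. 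I would also verify that the intermediate functions $\hat u,\hat w,w$ enjoy the same regularity as in Section~\ref{Sec3} with $z\in L^\infty$ (they do, since only $\|z\|_\infty$ enters those parabolic estimates), so that $v\in L^\infty((a,b)\times(0,T))$ and $\mathrm{supp}\,v\subset(a,b)$ exactly as before. This gives the claimed $\alpha_0=\alpha_0(y_0,T)$ and global null controllability of~\eqref{CPalpha} for all $\alpha>\alpha_0$.
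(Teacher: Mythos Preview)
Your approach is essentially the paper's: the same fixed-point scheme as in Section~\ref{Sec3}, with the smallness needed to close the ball coming from $\|z\|_\infty\le C\alpha^{-1}\|\overline y\|_{L^\infty(L^2)}$ rather than from small initial data. The paper runs the argument directly in $L^\infty(0,T;L^2(0,L))$ (rather than $L^\infty((0,L)\times(0,T))$), uses the energy estimate $\|y\|_{L^\infty(H^1_0)}\le C\|y_0\|_{H^1_0}\exp\!\big(C(1+1/T+(1+T)\alpha^{-2}\|\overline y\|_{L^\infty(L^2)}^2)\big)$, and then simply picks $R$ first and $\alpha_0$ afterwards so that the right-hand side stays below $R$ for all $\alpha>\alpha_0$ --- thereby bypassing your initial detour through the non-decaying bound $\|z\|_\infty\le\|\overline y\|_\infty$.
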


\begin{proof}	
	The main idea is, again, to apply a fixed point argument in \linebreak$L^\infty(0,T; L^2(0,L))$. 	
	
	For each $\overline y\in L^\infty(0,T; L^2(0,L))$, we introduce the solution $z$ to~\eqref{elipt-pde}. We notice that $z$ satisfies
\[
	\begin{alignedat}{2}
		\|z\|^2_{2}+2\alpha^2\|z_x\|^2_{2}\leq&~\|\overline y\|^2_{2},\\
		2\alpha^2\|z_x\|^2_{2}+\alpha^4\|z_{xx}\|^2_{2}\leq&~\|\overline y\|^2_{2}.
	\end{alignedat}
\]
	Then, as in the proof of Theorem~\ref{NC-Burgers}, we consider the solution $(y,v)$ to the system
\begin{equation}\label{BBBBBurgers null-infty}
	\left\{
		\begin{array}{lll}
			 y_t  - y_{xx} +z y_x = v1_{( a, b)}             & \text{in} &  (0,L)\times(0,T),  \\
			 y(0,\cdot) = y(L,\cdot) = 0                     	& \text{on}&  (0,T), 			\\
			 y(\cdot,0) = y_0                                 	& \text{in} &  (0,L),
		\end{array}
	\right.
\end{equation}	
	where we assume that $y$ satisfies \eqref{null_condition} and $v$ satisfies the estimate
\begin{equation}\label{ineqqq}
	\|v\|_\infty\leq \hat C\|y_0\|_\infty,\\
\end{equation}
	with
	$$
	\hat C= e^{C(a,b,L)(1+1/T+(1+T)\|z\|^2_\infty)}.
	$$
	It is then clear that
\begin{equation}\label{yalpha-ineqqqq}
\begin{alignedat}{2}
	\|y_t\|_{2}+\|y\|_{L^2(H^2)}+\|y\|_{L^\infty(H_0^1)}\leq&~C\|y_0\|_{H_0^1}e^{C(a,b,L)(1+1/T+(1+T)\|z\|^2_\infty)}.\\
\end{alignedat}
\end{equation}
	Since $\|z\|^2_{\infty}\leq \frac{C}{\alpha^2}\|\overline y\|^2_2$, we have
\[
	\|y_t\|_{2}+\|y\|_{L^2(H^2)}+\|y\|_{L^\infty(H_0^1)}\leq C\|y_0\|_{H_0^1}e^{C(a,b,L)\big(1+1/T+(1+T)\frac{1}{\alpha^2}\|\overline y\|^2_{L^\infty(L^2)}\big)}.
\]
	We can check that there exist $R$ and $\alpha_0$ such that
	$$
	C\|y_0\|_{H_0^1}e^{C(a,b,L)\big(1+1/T+(1+T)\frac{1}{\alpha^2}R^2\big)}<R,
	$$
for all $\alpha>\alpha_0$.
	Therefore, we can apply the fixed point argument in the ball $B_R$ of $L^2((0,L)\times(0,T))$ for these $\alpha$. This ends the proof.
\end{proof}

\

	Notice that we cannot expect \eqref{CPalpha} to be globally null-controllable with controls bounded independently of~$\alpha$, since the limit problem \eqref{CP} is not globally null-controllable, see~\cite{C-G1, SGR-Oleg}.
	More precisely, let $y_0\in H^1_0(0,L)$ and $T>0$ be given and let us denote by $\hat\alpha(y_0,T)$ the infimum of all $\alpha_0$ furnished by Theorem~\ref{NC-large-alpha}.
	Then, either $\hat\alpha(y_0,T) > 0$ or the associated cost of null controllability grows to infinity as $\alpha \to 0$, i.e.~the null controls of minimal norm $v_\alpha$ satisfy
	$$
\limsup_{\alpha \to 0^+} \|v_\alpha\|_{L^\infty((a,b) \times (0,T))} = +\infty.
	$$

%%%%
%%%%
%%%% NEW SUBSECTION
%%%%
%%%%

\subsection{The situation in higher spatial dimensions. The Leray-$\alpha$ system}

       Let $\Om \subset \mathbb{R}^N$ be a bounded connected and regular open set
       ($N = 2$ or $N = 3$) and let $\om \subset \Om$ be a
       (small) open set.
       We will use the notation $Q := \Om \times (0,T)$ and~$\Sigma := \partial\Om \times (0,T)$ and we will use bold symbols for  vector-valued functions
       and spaces of vector-valued functions.

       For any $\mathbf{f}$ and~any $\mathbf{y}_0$ in appropriate spaces, we will consider the Navier-Stokes system
\begin{equation}\label{NS2}
	\left\{
		\begin{array}{lll}
			\mathbf{y}_t  - \Delta \mathbf{y} +(\mathbf{y}\cdot \nabla) \mathbf{y}+ \nabla p =  \mathbf{f}          	 & \text{in} &       Q,    \\
			\nabla \cdot \mathbf{y} =0                                                                             					 & \text{in} &       Q,    \\
			\mathbf{y} = \textbf{0}                                                                                 					 & \text{on}&  \Sigma, \\
			\mathbf{y}(0) = \mathbf{y}_0                                                                           					 & \text{in} &  \Omega.
		\end{array}
	\right.
\end{equation}
	As before, we will also introduce a smoothing kernel and a related modification of~\eqref{NS2}.
	More precisely, the following so called Leray-$\alpha$ model will be of interest:
\begin{equation}\label{Lalpha}
	\left\{
		\begin{array}{lll}
			\mathbf{y}_t  - \Delta \mathbf{y} +(\mathbf{z} \cdot \nabla) \mathbf{y}+ \nabla p = \mathbf{f}          & \text{in} &       Q,     \\
			\nabla \cdot \mathbf{y}=\nabla \cdot \mathbf{z}= 0                                                     			& \text{in} &       Q,     \\
			\mathbf{z}-\alpha^2\Delta \mathbf{z} +\nabla \pi=\mathbf{y}                                            			& \text{in} &       Q,     \\
			\mathbf{y} = \mathbf{z}=\textbf{0}                                                                     					& \text{on}&  \Sigma,  \\
			\mathbf{y}(0) = \mathbf{y}_0                                                                           					& \text{in} &  \Omega.
		\end{array}
	\right.
\end{equation}	

	Let us recall the definitions of some function spaces that are frequently used in the analysis of incompressible fluids:
\begin{equation*}
\begin{array}{c}
	\mathbf{H}= \dis \left\{\, \mat{\varphi}\in\mathbf{L}^2(\Omega) : \nabla \cdot \mat{\varphi} = 0~\text{in}~ \Omega,
	\ \ \mat{\varphi}\cdot		\mathbf{n} = 0~\text{on}~\partial\Omega\,\right\},\\
	\noalign{\smallskip}
	\mathbf{V}=\dis\left\{\,\mat{\varphi}\in\mathbf{H}^1_0(\Omega) : \nabla\cdot\mat{\varphi}=0~ \text{in}~\Omega\,\right\}.
\end{array}
\end{equation*}

	It is not difficult to prove that, for any $\alpha > 0$, under some reasonable conditions on~$\mathbf{f}$ and~$\mathbf{y}_0$,
	\eqref{Lalpha} possesses a unique global weak solution.
	{
	This is stated rigo\-rously in the following proposition, that we present without proof
	(the arguments are similar to those in~\cite{TEMAM}; the detailed proof will appear in a forthcoming paper):
	}
	
\begin{propo}\label{E-U-L-alpha}
	Assume that $\alpha>0$.
	Then, for any $\mathbf{f}\in L^2(0,T;\mathbf{H}^{-1}(\Om))$ and any $\mathbf{y}_0\in \mathbf{H}$, there exists exactly one
	solution $(\mathbf{y}_\alpha,p_\alpha,\mathbf{z}_\alpha,\pi_{\alpha})$ to~\eqref{Lalpha}, with
\[
	\mathbf{y}_\alpha\in L^2(0, T;\mathbf{V}) \cap C^0([0, T];\mathbf{H}),~(\mathbf{y}_\alpha)_t\in L^1(0,T;\mathbf{V}'),
\]
\[
	\mathbf{z}_\alpha\in L^2\big(0,T;\mathbf{H}^2(\Om)\cap \mathbf{V}\big) \cap L^\infty\big(0,T;\mathbf{H}\big).
\]
	Furthermore, the following estimates hold:
\begin{equation}\label{yalpha-ineqq}
	\begin{alignedat}{2}
		\|(\mathbf{y}_\alpha)_t\|_{L^1(\mathbf{V}')}+\|\mathbf{y}_\alpha\|_{L^2(\mathbf{V})}+\|\mathbf{y}_\alpha\|_{L^\infty(\mathbf{H})}
		\leq&~C(\|\mathbf{y}_0\|_{2}+\|\mathbf{f}\|_{L^2(\mathbf{H}^{-1})}),														\\
		\|\mathbf{z}_\alpha\|^2_{L^\infty(\mathbf{H})}+2\alpha^2\|\mathbf{z}_\alpha\|^2_{L^\infty(\mathbf{V})}
		\leq&~\| \mathbf{y}_\alpha\|^2_{L^\infty( \mathbf{H})},																\\
		2\alpha^2\|(\mathbf{z}_\alpha)_x\|^2_{L^\infty(\mathbf{H})}+\alpha^4\|\Delta(\mathbf{z}_\alpha)\|^2_{L^\infty(\mathbf{H})}
		\leq&~\| \mathbf{y}_\alpha\|^2_{L^\infty( \mathbf{H})}.
	\end{alignedat}
\end{equation}
\end{propo}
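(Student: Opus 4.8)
The plan is to mimic the Burgers-$\alpha$ argument (Proposition~\ref{E-U-B-alpha}) in the incompressible setting, combining a Galerkin scheme with the energy estimates that are already encoded in~\eqref{yalpha-ineqq}. First I would fix $\overline{\mathbf{y}}\in L^2(0,T;\mathbf{H})$ and solve the Stokes-type elliptic problem $\mathbf{z}-\alpha^2\Delta\mathbf{z}+\nabla\pi=\overline{\mathbf{y}}$, $\nabla\cdot\mathbf{z}=0$, $\mathbf{z}|_{\partial\Omega}=\mathbf{0}$; standard elliptic regularity for the Stokes operator gives $\mathbf{z}\in L^2(0,T;\mathbf{H}^2(\Omega)\cap\mathbf{V})\cap L^\infty(0,T;\mathbf{H})$, together with the two elliptic bounds in~\eqref{yalpha-ineqq} obtained by testing the equation with $\mathbf{z}$ and with $-\Delta\mathbf{z}$ (the pressure term drops by the divergence-free condition). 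Next, with this $\mathbf{z}$ frozen in the convective term, I would solve the linear problem $\mathbf{y}_t-\Delta\mathbf{y}+(\mathbf{z}\cdot\nabla)\mathbf{y}+\nabla p=\mathbf{f}$, $\nabla\cdot\mathbf{y}=0$, via a Galerkin basis of $\mathbf{V}$; because $\mathbf{z}$ is divergence-free and vanishes on $\partial\Omega$, the antisymmetry $\int_\Omega (\mathbf{z}\cdot\nabla)\mathbf{y}\cdot\mathbf{y}=0$ holds, so the $\mathbf{H}$-energy estimate closes without any smallness condition and yields $\mathbf{y}\in L^2(0,T;\mathbf{V})\cap C^0([0,T];\mathbf{H})$ and the first line of~\eqref{yalpha-ineqq}; the bound on $(\mathbf{y}_\alpha)_t$ in $L^1(0,T;\mathbf{V}')$ follows by estimating each term of the equation against a test function in $\mathbf{V}$, the convective term being controlled in $L^1(0,T;\mathbf{V}')$ by $\|\mathbf{z}\|_{L^\infty(\mathbf{H})}\,\|\nabla\mathbf{y}\|_{L^2(\mathbf{L}^2)}$-type interpolation inequalities valid for $N\le 3$.

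Then I would set up the fixed-point map $\Lambda(\overline{\mathbf{y}})=\mathbf{y}$ on $L^2(0,T;\mathbf{H})$. The a priori bounds above show that $\Lambda$ sends a suitable ball into itself (indeed the $\mathbf{H}$-estimate for $\mathbf{y}$ does not see $\overline{\mathbf{y}}$ at all, only $\mathbf{f}$ and $\mathbf{y}_0$), and the Aubin--Lions lemma applied to the space $\{\mathbf{w}\in L^2(0,T;\mathbf{V}):\mathbf{w}_t\in L^1(0,T;\mathbf{V}')\}$ gives compactness of $\Lambda$ into $L^2(0,T;\mathbf{H})$, as in~\cite{TEMAM}. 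Continuity of $\Lambda$ is checked by passing to the limit in the linear equation along a converging sequence $\overline{\mathbf{y}}_n\to\overline{\mathbf{y}}$, using the strong convergence $\mathbf{z}_n\to\mathbf{z}$ in $L^2(0,T;\mathbf{V})$ (which comes from the elliptic equation exactly as in~\eqref{conv-z-L2}) to pass to the limit in the product term. Schauder's theorem then produces a fixed point, i.e. a solution $(\mathbf{y}_\alpha,p_\alpha,\mathbf{z}_\alpha,\pi_\alpha)$ of~\eqref{Lalpha} in the stated class, and the estimates~\eqref{yalpha-ineqq} are inherited from the linear and elliptic bounds above.

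For uniqueness I would take two solutions, subtract, denote the differences $\mathbf{u}=\mathbf{y}_\alpha-\mathbf{y}_\alpha'$, $\mathbf{m}=\mathbf{z}_\alpha-\mathbf{z}_\alpha'$, and test the equation for $\mathbf{u}$ with $\mathbf{u}$ itself: the difference of convective terms splits as $(\mathbf{z}_\alpha\cdot\nabla)\mathbf{u}+(\mathbf{m}\cdot\nabla)\mathbf{y}_\alpha'$, the first contributing zero by antisymmetry and the second estimated by $\|\mathbf{m}\|\,\|\nabla\mathbf{y}_\alpha'\|_{\mathbf{L}^2}$ together with the Stokes bound $\|\mathbf{m}\|_{\mathbf{H}}\le C\|\mathbf{u}\|_{\mathbf{H}}$; a Gronwall argument then forces $\mathbf{u}\equiv\mathbf{0}$, hence $\mathbf{m}\equiv\mathbf{0}$, and the pressures are recovered uniquely up to constants. \textbf{The main obstacle} is the regularity of the convective term in dimension $N=3$: since the solution only lies in $L^2(0,T;\mathbf{V})\cap L^\infty(0,T;\mathbf{H})$, the term $(\mathbf{z}\cdot\nabla)\mathbf{y}$ need not be better than $L^1(0,T;\mathbf{V}')$, which is why the time derivative is only claimed in $L^1(0,T;\mathbf{V}')$ and why uniqueness has to be argued carefully using the smoothing gained from $\mathbf{z}$ solving an elliptic problem (the term $\mathbf{z}$ is one derivative smoother than $\mathbf{y}$, which is exactly what compensates). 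This is the step where the $\alpha$-regularization is essential, just as in the one-dimensional case, and it is the point that the forthcoming detailed paper referred to in the text will have to treat with full rigor.
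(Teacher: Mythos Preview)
The paper does not actually prove this proposition: immediately before the statement it says explicitly that the result is ``presented without proof (the arguments are similar to those in~\cite{TEMAM}; the detailed proof will appear in a forthcoming paper)''. So there is nothing to compare your proposal against except that one-line hint.

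Your sketch is precisely the approach the paper is pointing to---a Temam-style Galerkin/fixed-point argument mirroring the one-dimensional Proposition~\ref{E-U-B-alpha}, using the antisymmetry of the convective term (with divergence-free $\mathbf{z}$) to close the $\mathbf{H}$-energy estimate, and Aubin--Lions for compactness. One point to sharpen: in the uniqueness step you write the bound on the cross term as $\|\mathbf{m}\|\,\|\nabla\mathbf{y}_\alpha'\|_{\mathbf{L}^2}$ and then invoke only $\|\mathbf{m}\|_{\mathbf{H}}\le C\|\mathbf{u}\|_{\mathbf{H}}$. In dimension $N=3$ that is not enough by itself (an $L^2\times L^2\times L^2$ trilinear estimate fails); you need the full elliptic gain $\|\mathbf{m}\|_{\mathbf{H}^2}\le C_\alpha\|\mathbf{u}\|_{\mathbf{H}}$ from the Stokes problem, and hence $\mathbf{m}\in L^\infty(\Omega)$ by Sobolev, to close Gronwall. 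You do flag exactly this as ``the main obstacle'' and point to the elliptic smoothing of $\mathbf{z}$, so the idea is there---just make sure the norm you actually use on $\mathbf{m}$ in the estimate is the $H^2$ (or $L^\infty$) one, not merely $\mathbf{H}$.
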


	In view of the estimates \eqref{yalpha-ineqq}, there exists $\mathbf{y}\in L^2\left(0, T;\mathbf{V})\right)$ with~$\mathbf{y}_t\in L^1(0,T;\mathbf{V'})$
	such that, at least for a subsequence,
   \begin{equation}\label{converyvec}
\left.
\begin{array}{l}
	\mathbf{y}_\alpha \to \mathbf{y} \hbox{ weakly in } L^2\left(0, T;\mathbf{V})\right), \\
\noalign{\smallskip}	(\mathbf{y}_\alpha)_t \to  \mathbf{y}_t \hbox{ weakly-* in } L^1(0,T;\mathbf{V'}).
\end{array}
\right.
   \end{equation}

	Thanks to the Aubin-Lions' Lemma, the Hilbert space
$$
	W=\{w\in L^2\left(0, T;\mathbf{V}\right); w_t\in L^1(0,T;\mathbf{V'})\}
$$
	is  compactly embedded
	in~$\mathbf{L}^2(Q)$ and we thus have
\begin{equation}\label{ccccconv-y-L2H01}
	\mathbf{y}_\alpha \to \mathbf{y} \hbox{ strongly in }\mathbf{L}^2(Q).
\end{equation}
	Also, using the second equation in \eqref{Lalpha} we see that
\[
\begin{alignedat}{2}
	( \mathbf{z}_\alpha- \mathbf{y}) - \alpha^2\Delta( \mathbf{z}_\alpha- \mathbf{y} )+\nabla\pi
	=( \mathbf{y}_\alpha- \mathbf{y})+\alpha^2\Delta  \mathbf{y}.
\end{alignedat}
\]
	Therefore, after some computations, we deduce that
\begin{equation}\label{ccccconv-z-L2}
	 \mathbf{z}_\alpha \rightarrow  \mathbf{y}\hbox{ strongly in } \mathbf{L}^2(Q).
\end{equation}
	This proves that we can find $p$ such that $(\mathbf{y},p)$ is solution to~\eqref{NS2}.

	In other words, at least for a subsequence, the solutions to the Leray-$\alpha$ system converge
	(in the sense of~\eqref{converyvec}) towards a solution to the Navier-Stokes system.

	Let us now consider the following controlled systems for the Navier-Stokes and Leray-$\alpha$ systems:
\begin{equation}\label{NS}
	\left\{
		\begin{array}{lll}
     			\mathbf{y}_t  - \Delta \mathbf{y} +(\mathbf{y}\cdot \nabla) \mathbf{y}+ \nabla p =  \mathbf{v}1_\omega  			  & \text{in}&       Q,     \\
     			\nabla \cdot \mathbf{y} =0                                                                             								  & \text{in}&       Q,     \\
     			\mathbf{y} = \textbf{0}                                                                              								          & \text{on}&  \Sigma, \\
     			\mathbf{y}(0) = \mathbf{y}_0                                                                          								  & \text{in}&  \Omega
		\end{array}
	\right.
\end{equation}
	and
\begin{equation}\label{CLalpha}
	\left\{
		\begin{array}{lll}
			\mathbf{y}_t  - \Delta \mathbf{y} +(\mathbf{z} \cdot \nabla) \mathbf{y} + \nabla p = \mathbf{v}1_\omega 		& \text{in}&       Q, \\
			\nabla \cdot \mathbf{y}= \nabla \cdot \mathbf{z}= 0                                                     					& \text{in}&       Q, \\
			\mathbf{z}-\alpha^2\Delta \mathbf{z} +\nabla \pi=\mathbf{y}                                             					& \text{in}&       Q, \\
			\mathbf{y} = \mathbf{z}= \mathbf{0}                                                                     						& \text{on}&  \Sigma, \\
			\mathbf{y}(0) = \mathbf{y}_0                                                                            							& \text{in}&  \Omega,
		\end{array}
	\right.
\end{equation}
	where $\mathbf{v} = \mathbf{v}(x, t)$ stands for the control function.

	With arguments similar to those in~\cite{FC-G-P}, it can be proved that, for any~$T > 0$, there exists $\eps > 0$ such that, if
	$\| \mathbf{y}_0 \| < \eps$, for each $\alpha > 0$ we can find controls $\mathbf{v}_\alpha \in \mathbf{L}^2(\om\times(0,T))$ and
	associate states $(\mathbf{y}_\alpha,p_\alpha,\mathbf{z}_\alpha,\pi_{\alpha})$ satisfying
\[
	\mathbf{y}_\alpha(x,T) = \mathbf{0} \quad \text{in} \quad \Om.
\]

	{
	In a forthcoming paper, we will show that these null controls $\mathbf{v}_\alpha$ can be bounded independently of $\alpha$ and a result similar to Theorem~\ref{conver-cont} holds for~\eqref{CLalpha}.	
	}

%%%%%%%%%%%%%%%%%%%%%%%%%%%%%%%%%%%%%%%%%%%%
%%%%%%%%%%%%%%%%%%%%%%%%%%%%%%%%%%%%%%%%%%%%
%%%%%%%%%%%%% REFERENCES   %%%%%%%%%%%%%%%%%%%%%%
%%%%%%%%%%%%%%%%%%%%%%%%%%%%%%%%%%%%%%%%%%%%
%%%%%%%%%%%%%%%%%%%%%%%%%%%%%%%%%%%%%%%%%%%%

\end{document}